\newtheorem{proposition}{Proposition}[section]
\newtheorem{theorem}{Theorem}[section]
\newtheorem{definition}{Definition}[section]
\newtheorem{conjecture}{Conjecture}[section]
\newtheorem{corollary}{Corollary}[section]
\newtheorem{example}{Example}[section]
\newtheorem{lemma}{Lemma}[section]
\newtheorem{problem}{Problem}[section]
\newcommand{\bb}[1]{\mathbb{#1}}
\newcommand{\rimp}{\Rightarrow}
\newcommand{\rlimp}{\Leftrightarrow}
\newcommand{\Alg}[1]{\mathbf{#1}}
\newcommand{\Sym}[1]{\Alg{S}_{#1}}
\newcommand{\sym}[1]{S_{#1}}
\newcommand{\Alt}[1]{\Alg{A}_{#1}}
\newcommand{\alt}[1]{A_{#1}}
\newcommand{\Dih}[1]{\Alg{D}_{#1}}
\newcommand{\dih}[1]{D_{#1}}
\newcommand{\Cent}[2]{\operatorname{C}_{#1}\left(#2\right)}
\newcommand{\LMlt}[1]{\operatorname{LMlt\left(#1\right)}}
\newcommand{\Cl}[2]{\operatorname{\Alg{Cl}}_{#1}\left(#2\right)}
\newcommand{\cl}[2]{\operatorname{Cl}_{#1}\left(#2\right)}
\newcommand{\Inn}[1]{\operatorname{Inn}\left(#1\right)}
\newcommand{\Gen}[2]{\left<#2\right>_{#1}}
\newcommand{\slant}[2]{{\raisebox{.2em}{$#1$}\left/\raisebox{-.2em}{$#2$}\right.}}
\newcolumntype{C}[1]{>{\centering\arraybackslash}m{#1}}
\newcolumntype{N}{@{}m{0pt}@{}}
\definecolor{light-gray}{gray}{0.7}
\definecolor{light-gray2}{gray}{0.85}
\newcommand{\thickhline}{%
    \noalign {\ifnum 0=`}\fi \hrule height 1pt
    \futurelet \reserved@a \@xhline
}
\newcolumntype{[}{@{\vrule width 1pt\hskip\tabcolsep}} \newcolumntype{]}{@{\hskip\tabcolsep\vrule width 1pt}}
\newcolumntype{"}{@{\hskip\tabcolsep\vrule width 1pt\hskip\tabcolsep}}
\begin{document}

\onehalfspacing

\begin{center}

{\Large\textbf{Hayashi Property for Conjugation Quandles}} \\
{\textcolor{white}{.}} \\
{\Large{Filip Filipi}} \\
{\textcolor{white}{.}} \\
{\small{Department of Algebra}} \\
{\small{Faculty of Mathematics and Physics}} \\
{\small{Charles University}} \\
{\small{Sokolovská 83, Praha 8, Prague, 186 75, Czech Republic}} \\
{\small{\texttt{filip.filipi@matfyz.cuni.cz}}} \\
{\textcolor{white}{.}} \\

\end{center}

\pagenumbering{arabic}

\pagestyle{plain}

\begin{abstract}
We give a comprehensive description of conjugation quandles and their connectedness. In this context, we find a characterization of Hayashi's conjecture (2013) in terms of a centrality condition of groups. This condition is thus a conjecture itself and it states that powers of elements of a finite and generating conjugacy classes should be central whenever they commute with one particular element of this class. We prove this condition in several cases, e.g. for finite nilpotent, symmetric, alternating, and dihedral groups. All of these results translate to Hayashi's conjecture for the corresponding conjugation quandles. \\
{\textcolor{white}{.}}\\
\textbf{Keywords}: Hayashi's conjecture; quandle; conjugation; conjugacy class; group.\\
\textbf{MSC2020}: 20E34, 20E45, 20N02, 57K12.

\end{abstract}

\section{Introduction}
An algebraic structure called \emph{quandle} arises naturally in knot theory. The axioms of this structure are based on the Reidemeister moves. Those are a complete collection of knot diagram manipulations needed to decide whether two diagrams represent the same knot. Using a notion of proper knot diagram colorings, this allows quandles to define an invariant useful in distinguishing (oriented) knots by their diagrams. 
To those interested in the origin and motivation of quandles, we recommend reading the introductory section of \cite{fish} by Fish, Lisitsa \& Stanovský, which explains all this in a comprehensive yet concise manner. More complete basic knowledge and concepts from knot theory can be found in the book Quandles \cite{elhamdadi} by Elhamdadi \& Nelson.

As stated in Section 5.2 of \cite{ohtsuki}, if two knots can be distinguished by a quandle, then they can also be distinguished by one of its \emph{connected} subquandles. A structural restriction for finite connected quandles has been conjectured by Hayashi in 2013 \cite{hayashi}. This conjecture has been further studied in \cite{conjecture1,conjecture2,conjecture4,kayacan,conjecture3}, and it was pointed out in \cite{review} that the results of \cite{horosevski} also contribute to it. For a family of so called \emph{conjugation} quandles, this conjecture still remains open.

This work completes Kayacan's ideas that first appeared in an arXiv paper \cite{kayacan} in 2021 and provides more accessible proofs and generalizations of some of his claims in the process. We introduce a notion of a \emph{Hayashi property} and in Proposition \ref{prop: centralizer property general}, we characterize it in terms of a structural property of groups. This especially applies to Hayashi's conjecture for connected conjugation quandles and motivates Conjecture \ref{conjecture: groups} claiming that for some group elements, the property of being central might actually be witnessed on a single element of this group. Moreover, we study this property in a greater context provided by Problem \ref{problem: Hayashi property} and we decide it for $\Sym{n}, \Alt{n}, \Dih{2n}$, finite nilpotent groups, simple non-abelian groups of order $\leq 3.500.000$, and general groups of order $\leq 500$. These results especially translate to proofs of Hayashi's conjecture for particular conjugation quandles.

\section{Preliminaries}
Most of the terminology used is based on \cite{cvrcek} written by Cvrček \& Stanovský.

\vspace{1ex}
\noindent\textbf{Notation.} Let there be a set $Q$. Given a subgroup $\Alg{H}$ of the symmetric group $\Sym{Q}$, we denote $\Alg{H}_z$, the \emph{stabilizer subgroup} of $\Alg{H}$ with respect to $z\in Q$, that is,  $H_z= \{f\in \Sym{Q}\mid  f(z) =z\}$.

\vspace{1ex}
\noindent\textbf{Notation.} Given a group $\Alg{G}$ and its element $g$, by $\phi_g$ we denote the (left) \emph{conjugation} by an element $g$ in the group $G$, that is, $\phi_g:G\to G, \ x\mapsto gxg^{-1}$. Such a mapping is an automorphism of $\Alg{G}$, and a set of all such maps form the \emph{inner automorphism group} of $\Alg{G}$, denoted by $\Inn{\Alg{G}}\leq \Sym{G}$. For an element $h$ of a subgroup $\Alg{H}\leq \Alg{G}$, it should be clear from the context whether the domain of $\phi_h$ is considered to be $H$ or $G$. 

\vspace{1ex}
\noindent\textbf{Notation.} Given a group $\Alg{G}$ and its element $z$, we denote $\Cent{\Alg{G}}{z}:=\{g\in G\mid gz=zg\}\leq \Alg{G}$, the \emph{centralizer} of $z$ in the group $\Alg{G}$.

\begin{definition}
    Let $Q$ be a finite nonempty set and $\pi \in \sym{Q}$, we say that $(\lambda_1^{\alpha_1}, \dots, \lambda_t^{\alpha_t})$ is a \emph{cycle structure}\footnote{or \emph{profile}} of a permutation $\pi$ if the following conditions are satisfied
    \begin{itemize}
        \item $\lambda_1\lneq \lambda_2\lneq \dots\lneq \lambda_t$,
        \item for every $i\in \{1, \dots, t\}$, $\alpha_i\in \bb{N}$ is the number of cycles of length $\lambda_i$ in $\pi$,
        \item each cycle is counted, that is $\sum_{i=1}^{t}\alpha_i\cdot\lambda_i = \left|Q\right|$.
    \end{itemize}

    As long as $Q$ is finite, it always exists, is uniquely defined, and is preserved upon conjugation by elements of $S_Q$. Furthermore, if it holds that $\lambda_i\mid\lambda_t$ for all $i\in\{1,\dots, t\}$, we say that the permutation $\pi$ contains a \emph{regular cycle}.
\end{definition}


\vspace{1ex}
\noindent\textbf{Remark.} Whenever $x\in Q$ is an element of a cycle of length $\lambda$ in $\pi\in \sym{Q}$, then $\lambda$ equals the smallest $k\in \bb{N}$ such that $\pi^k(x) = x$.

\begin{definition}
    Let $Q$ be a nonempty set and $\star$ a binary operation on it, then $\Alg{Q} = (Q, \star)$ is called a \emph{quandle} if for each $a\in Q$ it holds $a\star a = a$ and the \emph{left translation}\footnote{one might encounter dual definition of a quandle using right translations} $$L_a: Q\to Q,\ x\mapsto a\star x$$ is an automorphism of $\Alg{Q}$.

    We denote $\LMlt{\Alg{Q}}:=\Gen{}{L_a\mid a\in Q}\leq \Sym{Q}$ the \emph{(left) multiplication group} of a quandle $\Alg{Q}$ and we say $\Alg{Q}$ is \emph{connected}\footnote{or \emph{indecomposable}} if $\LMlt{\Alg{Q}}$ acts \emph{transitively} on $Q$, i.e., for any $a,b\in Q$ there is $\varphi\in\LMlt{\Alg{Q}}$ such that $b = \varphi (a)$.
\end{definition}

It is useful to introduce the following property of quandles.
\begin{definition}
    We say that a quandle $\Alg{Q}$ has a \emph{Hayashi property} if all of its left translations contain a regular cycle.
\end{definition}

Using it, we may now state a conjecture proposed in \cite{hayashi} as follows.
\begin{conjecture}[Hayashi, 2013]\label{conjecture: Hayashi}
    Finite connected quandles have the Hayashi property.
\end{conjecture}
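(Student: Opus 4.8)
As the introduction records, the conjecture remains open precisely for \emph{conjugation} quandles, so the plan is to concentrate on those. Recall that such a quandle has underlying set a union of conjugacy classes of a group $\Alg{G}$ with $a\star b=aba^{-1}$. First I would reduce to a single class: since $\LMlt{\Alg{Q}}\leq\Inn{\Alg{G}}$, every $\LMlt{\Alg{Q}}$-orbit lies inside one conjugacy class, so a connected conjugation quandle is $\mathrm{Conj}(\Alg{G},C)$ for a single conjugacy class $C$. Its connectedness is equivalent to $\Alg{G}=\Gen{}{C}\cdot\Cent{\Alg{G}}{x}$ for one (equivalently every) $x\in C$, and since replacing $\Alg{G}$ by $\Gen{}{C}$ leaves the quandle on $C$ unchanged, one may assume $\Alg{G}=\Gen{}{C}$. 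Finiteness of the quandle means only that $C$ is finite, i.e.\ $[\Alg{G}:\Cent{\Alg{G}}{a}]<\infty$; the group $\Alg{G}$ itself may be infinite, which is why ``finite'' and ``generating'' are the natural hypotheses on $C$.

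The next step is to rewrite the Hayashi property in group language, which is the content of Proposition~\ref{prop: centralizer property general}. In a connected quandle all left translations are conjugate in $\LMlt{\Alg{Q}}$, hence share one cycle structure, so it is enough to analyse one $L_a=\phi_a|_{C}$ with $a\in C$. The cycle of $L_a$ through $x\in C$ has length $\ell_x=[\Gen{}{a}:\Gen{}{a}\cap\Cent{\Alg{G}}{x}]$, the least $k\geq1$ with $a^{k}x=xa^{k}$. Since $a\in C$ forces $\ell_a=1$, the permutation $L_a$ always has a fixed point, and it contains a regular cycle exactly when its longest cycle has length equal to $\operatorname{lcm}_{x\in C}\ell_x$. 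A short computation using $\Alg{G}=\Gen{}{C}$ identifies this lcm with the order $n$ of $aZ(\Alg{G})$ in $\Alg{G}/Z(\Alg{G})$: each $\ell_x$ divides $n$ because $a^{n}\in Z(\Alg{G})\leq\Cent{\Alg{G}}{x}$, and if $a^{m}$ commuted with every element of the generating set $C$ then $a^{m}\in Z(\Alg{G})$, forcing $n\mid m$. Consequently the Hayashi property for $\mathrm{Conj}(\Alg{G},C)$ amounts to the existence of a single $x\in C$ realising the maximum, i.e.\ with $\Gen{}{a}\cap\Cent{\Alg{G}}{x}=\Gen{}{a}\cap Z(\Alg{G})$ — equivalently, \emph{some element of the class has the property that every power of $a$ commuting with it is central}. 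This is exactly Conjecture~\ref{conjecture: groups}, to which Conjecture~\ref{conjecture: Hayashi} for conjugation quandles is thereby reduced.

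The remaining, genuinely conditional, part of the program is to verify Conjecture~\ref{conjecture: groups} for concrete families. For a finite nilpotent group one splits $\Alg{G}$ into its Sylow subgroups and argues inside a $p$-group, using that centralizers of non-central elements are proper of index a multiple of $p$, together with a lower-central-series argument, to choose a conjugate $x$ of $a$ for which $\Cent{\Alg{G}}{x}\cap\Gen{}{a}$ collapses to $Z(\Alg{G})\cap\Gen{}{a}$. For $\Sym{n}$ and $\Alt{n}$ the centre is trivial (for $n$ large enough), conjugacy classes are cycle types, and $\Cent{\Sym{n}}{\pi}$ is a direct product of wreath products indexed by the cycle lengths of $\pi$, so the task becomes: produce a conjugate $x$ of $a$ with $\Gen{}{a}\cap\Cent{\Sym{n}}{x}=1$, while checking that $C$ actually generates the ambient group — which for classes of even permutations means passing to $\Alt{n}$ and dealing with the possible splitting of the class. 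For $\Dih{2n}$ the centralizer structure is elementary (a nontrivial rotation is centralised only by the rotation subgroup, a reflection by a subgroup of order at most $4$), so a suitable $x$ is exhibited directly. In each case, once the group statement is proved, the translation back to Hayashi's conjecture for the corresponding conjugation quandle is immediate from the equivalence above.

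The main obstacle is that Conjecture~\ref{conjecture: groups} is itself open, so one should not expect an unconditional proof of Conjecture~\ref{conjecture: Hayashi} here — only the reduction and the family-by-family checks. Among those families I expect $\Sym{n}$ and $\Alt{n}$ to be the hard case: finding $x\in C$ with $\Gen{}{a}\cap\Cent{\Sym{n}}{x}$ trivial requires picking the conjugate so that no nontrivial power of the permutation survives inside the wreath-product centralizer of $x$, and this must be reconciled with the generation requirement (a single long cycle, or a class contained in $\Alt{n}$, calls for separate treatment). The nilpotent case is more routine but still needs attention when $\Gen{}{a}$ meets several Sylow factors; the dihedral case should be straightforward.
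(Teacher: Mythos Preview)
The statement is a conjecture, and the paper does not prove it; it is explicitly left open, and the entire paper is a program for attacking the conjugation-quandle case by reducing it to the group-theoretic Conjecture~\ref{conjecture: groups} (via Proposition~\ref{prop: centralizer property general} and Theorem~\ref{theorem: hayashi equivalence}) and then verifying that group condition for particular families. You correctly recognise this, and your outlined strategy matches the paper's program almost point for point: reduce to a single generating conjugacy class, translate the Hayashi property into the centralizer condition, and then work family by family.

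Two places where your sketch diverges from what the paper actually does are worth noting. For finite nilpotent groups you propose a lower-central-series argument inside each Sylow $p$-subgroup to force $\Gen{}{a}\cap\Cent{\Alg{G}}{x}$ down to $\Gen{}{a}\cap Z(\Alg{G})$. The paper's route is much simpler and bypasses the group condition entirely: if $c$ has prime-power order then $L_c$ has prime-power order, so all its cycle lengths are powers of the same prime and automatically divide the largest one (Proposition~\ref{prop: powers of prime}); since a finite nilpotent group is a direct product of $p$-groups and the Hayashi property passes to finite direct products of conjugation quandles (Proposition~\ref{prop: product}, Corollary~\ref{cor: finite products}), no centralizer analysis is needed at all. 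For $\Sym{n}$ and $\Alt{n}$ you are right that this is the hard case, but the paper does not exploit the wreath-product description of centralizers; it gives an explicit, rather delicate construction of a conjugate $z$ of $e$ with $\Gen{}{e}\cap\Cent{\Sym{n}}{z}=\Alg{1}$ (Proposition~\ref{prop: there is z in sym and alt}), splitting into several cases according to the cycle structure of $e$.

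One small overreach: the introduction does not say the conjecture remains open \emph{precisely} for conjugation quandles, only that it remains open for that family. Your reduction therefore addresses one open instance of Conjecture~\ref{conjecture: Hayashi}, not the whole of it.
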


Given a quandle automorphism $\alpha$ and $a,x$ elements  of its ground set, \begin{equation}\label{equation: left translations are conjugate}
    L_{\alpha(a)}(x) = \alpha(a)\star x = \alpha(a\star \alpha^{-1}(x)) = \alpha\circ L_a\circ \alpha^{-1}(x).
\end{equation}In connected quandles, for any two left translations $L_a, L_b$, there is a quandle automorphism $\varphi$ such that $b  = \varphi(a)$. Due to Eq. \ref{equation: left translations are conjugate}, it holds that $L_b = L_{\varphi(a)} = \varphi\circ L_a \circ \varphi^{-1}$, and so they have the same cycle structure. This shows that connected quandle has the Hayashi property if and only if at least one of its left translations contains a regular cycle.

\section{Conjugation Quandles and their Connectedness}\label{sec: conjugation quandles}

We now define a special type of a quandle which is related to groups. This shall be the main object studied in this paper.
\begin{definition}
    Let $\Alg{G}$ be a group and $\Alg{C} := (C, \star)$ a quandle. If $C\subseteq G$ and $a\star b = aba^{-1}$ for each $a,b\in C$, then we say $\Alg{C}$ is a \emph{conjugation quandle} over the group $\Alg{G}$.
\end{definition}
Description of conjugation quandles seems to be well known, but since we could not find any references, we derive it ourselves. The following lemma is a simple observation which will be used with no further mentions.
\begin{lemma}\label{lemma: left translations}
    Let $\Alg{C}$ be a conjugation quandle over a group $\Alg{G}$. Then\begin{enumerate}
        \item for any $c\in C$ it holds $L_c = \phi_c\restriction_C$ and $L_c^{-1} = \phi_{c^{-1}}\restriction_C$,
        \item $\LMlt{\Alg{C}} = \Inn{\Gen{}{C}}\restriction_C$ as groups.
    \end{enumerate}
\end{lemma}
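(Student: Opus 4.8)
The plan is to unwind the definitions; both parts are essentially bookkeeping, and the one point that genuinely uses the quandle axioms (rather than mere closure of $C$ under $\star$) is the well-definedness of the relevant restriction maps on $C$.

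For part (1), I would observe directly from the definition of a conjugation quandle that $L_c(x)=c\star x=cxc^{-1}=\phi_c(x)$ for every $x\in C$, so $L_c=\phi_c\restriction_C$ as self-maps of $C$. Since $\Alg{C}$ is a quandle, $L_c$ is an automorphism of $\Alg{C}$, in particular a bijection of $C$; hence for each $y\in C$ there is $x\in C$ with $cxc^{-1}=y$, i.e. $c^{-1}yc=\phi_{c^{-1}}(y)\in C$. Thus $\phi_{c^{-1}}$ restricts to a self-map of $C$, and from $\phi_c\circ\phi_{c^{-1}}=\phi_{cc^{-1}}=\phi_e=\mathrm{id}_G$ (and symmetrically) I conclude that $\phi_{c^{-1}}\restriction_C$ is the two-sided inverse of $L_c=\phi_c\restriction_C$, i.e. $L_c^{-1}=\phi_{c^{-1}}\restriction_C$.

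For part (2), I would first check that the assignment $\phi_g\mapsto\phi_g\restriction_C$ is a well-defined group homomorphism $\Inn{\Gen{}{C}}\to\Sym{C}$: writing $g=c_1^{\varepsilon_1}\cdots c_k^{\varepsilon_k}$ with $c_i\in C$ and $\varepsilon_i\in\{1,-1\}$, part (1) gives $\phi_g\restriction_C=L_{c_1}^{\varepsilon_1}\circ\cdots\circ L_{c_k}^{\varepsilon_k}$, which is a bijection of $C$ and depends only on the inner automorphism $\phi_g$ (two elements of $\Gen{}{C}$ inducing the same inner automorphism agree on $C\subseteq\Gen{}{C}$). Hence $\Inn{\Gen{}{C}}\restriction_C$ is a subgroup of $\Sym{C}$. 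The inclusion $\LMlt{\Alg{C}}\subseteq\Inn{\Gen{}{C}}\restriction_C$ then follows since each generator $L_c=\phi_c\restriction_C$ lies in the right-hand side (as $c\in C\subseteq\Gen{}{C}$) and the right-hand side is a subgroup; the reverse inclusion follows from the same factorisation, which exhibits $\phi_g\restriction_C$ as a word in the $L_{c}^{\pm 1}$, hence an element of $\LMlt{\Alg{C}}$. As both sides are subgroups of $\Sym{C}$, the equality "as groups" is then immediate.

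I do not expect a real obstacle here; the only thing worth stating carefully is that closure of $C$ under $\star$ does not by itself guarantee that $\phi_{c^{-1}}\restriction_C$ (or $\phi_g\restriction_C$ for $g\in\Gen{}{C}$) maps $C$ into $C$ — this relies on surjectivity of the left translations, which is part of the quandle axioms.
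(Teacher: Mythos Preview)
Your proposal is correct and follows essentially the same approach as the paper: part (1) is read off from the definition together with bijectivity of $L_c$, and part (2) is obtained by factoring $\phi_g$ for $g\in\Gen{}{C}$ as a word in the $\phi_{c_i}^{\pm1}$ and identifying these with $L_{c_i}^{\pm1}$. If anything, you are slightly more careful than the paper in spelling out why $\phi_{c^{-1}}\restriction_C$ and $\phi_g\restriction_C$ land in $C$, which the paper's proof leaves implicit.
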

\begin{proof}
    \begin{enumerate}
        \renewcommand{\labelenumi}{\emph{(\theenumi)}}
        \item First part is given by the definition of the quandle operation of $\Alg{C}$. To prove the second part, observe that $$\phi_{c^{-1}}\restriction_C \circ L_c = \phi_{c^{-1}}\restriction_C\circ \phi_c\restriction_C = \operatorname{id}_C,$$ and since we know $L_c$ is a bijection, $\phi_{c^{-1}}\restriction_C$ has to be its both-sided inverse.
        \item According to \emph{(1)},$$\LMlt{\Alg{C}} = \Gen{}{\{L_c\mid c\in C\}} = \Gen{}{\{\phi_c\restriction_C\mid c\in C\}}.$$ Since it holds that $\phi_c\restriction_C\circ \phi_{c'}\restriction_C = \phi_{cc'}\restriction_C$ and $(\phi_c\restriction_C)^{-1} = \phi_{c^{-1}}\restriction_C$, we additionally deduce$$\Gen{}{\{\phi_c\restriction_C\mid c\in C\}} = \left\{\phi_h\restriction_C\mid h\in\Gen{}{C}\right\}=\Inn{\Gen{}{C}}\restriction_C.$$
    \end{enumerate}
\end{proof}

\begin{theorem}\label{theorem: classification}
    Let $\Alg{G}$ be a group and $C\subseteq G$. Then TFAE
    \begin{enumerate}
        \item there is a conjugation quandle over the group $\Alg{G}$ on $C$,
        \item $C$ is nonempty and closed upon conjugation by elements of $\Gen{}{C}$,
        \item $C$ is a nonempty union of conjugacy classes of the group $\Gen{}{C}$.
    \end{enumerate}
\end{theorem}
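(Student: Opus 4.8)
The plan is to prove the cycle of implications $(1)\Rightarrow(2)\Rightarrow(3)\Rightarrow(1)$, which seems the most economical route given the tools already available.

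First I would prove $(1)\Rightarrow(2)$. Suppose $\Alg{C}=(C,\star)$ is a conjugation quandle over $\Alg{G}$. Then $C$ is nonempty by the definition of a quandle. For closure under conjugation by elements of $\Gen{}{C}$, the key observation is that each left translation $L_c = \phi_c\restriction_C$ is an automorphism of $\Alg{Q}$, hence in particular a bijection $C\to C$; thus $cCc^{-1}=C$ and $c^{-1}Cc = C$ for every $c\in C$. Since an arbitrary $h\in\Gen{}{C}$ is a product of elements of $C$ and their inverses, iterating gives $hCh^{-1}=C$, so $C$ is closed under conjugation by $\Gen{}{C}$.

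Next, $(2)\Rightarrow(3)$. Assume $C$ is nonempty and $hCh^{-1}=C$ for all $h\in\Gen{}{C}$ (note $\subseteq$ applied to $h$ and to $h^{-1}$ upgrades to equality). For $c\in C$, the conjugacy class $\cl{\Gen{}{C}}{c}=\{hch^{-1}\mid h\in\Gen{}{C}\}$ is then contained in $C$; conversely every element of $C$ lies in some such class; and distinct conjugacy classes of $\Gen{}{C}$ are disjoint. Hence $C=\bigcup_{c\in C}\cl{\Gen{}{C}}{c}$ is a nonempty union of conjugacy classes of $\Gen{}{C}$.

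Finally, $(3)\Rightarrow(1)$. Given that $C$ is a nonempty union of conjugacy classes of $\Alg{H}:=\Gen{}{C}$, define $\star$ on $C$ by $a\star b := aba^{-1}$. I must check this lands in $C$: for $a,b\in C$ we have $a\in H$, so $aba^{-1}\in\cl{\Alg{H}}{b}\subseteq C$, so $\star$ is well-defined on $C$. Then $a\star a = a$ is immediate, and $L_a(x)=axa^{-1}$ restricted to $C$ is the restriction of the inner automorphism $\phi_a$ of $\Alg{H}$, which is a bijection of $C$ (again using $aCa^{-1}=C$); it respects $\star$ because $\phi_a(x\star y)=\phi_a(xyx^{-1})=\phi_a(x)\phi_a(y)\phi_a(x)^{-1}=\phi_a(x)\star\phi_a(y)$. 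Hence $(C,\star)$ is a quandle, and by construction it is a conjugation quandle over $\Alg{G}$.

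I do not anticipate a serious obstacle here; the only point requiring slight care is the passage from ``$hCh^{-1}\subseteq C$ for all $h\in\Gen{}{C}$'' to ``$=$'', which follows by applying the hypothesis simultaneously to $h$ and $h^{-1}$, together with the elementary remark that $\Gen{}{C}$ is closed under inverses. Everything else is a routine unwinding of definitions, leaning on Lemma \ref{lemma: left translations} to identify $L_c$ with $\phi_c\restriction_C$.
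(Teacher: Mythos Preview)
Your proof is correct and follows essentially the same argument as the paper. The only difference is organizational: you prove the cycle $(1)\Rightarrow(2)\Rightarrow(3)\Rightarrow(1)$, whereas the paper proves $(1)\Leftrightarrow(2)$ and $(2)\Leftrightarrow(3)$ separately; your $(3)\Rightarrow(1)$ step effectively combines the paper's $(3)\Rightarrow(2)$ and $(2)\Rightarrow(1)$ into one, but the underlying verifications are identical.
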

\begin{proof}

    \underline{\emph{(1)}$\rimp$\emph{(2)}}: Let $\Alg{C}:= (C, \star)$ be a conjugation quandle over $\Alg{G}$. For any $c,a\in C$ it holds that $cac^{-1} = c\star a\in C$ and $c^{-1}ac = \phi_{c^{-1}}\restriction_C(a) = L_c^{-1}(a)\in C$, so $C$ is closed upon conjugation by elements of $C$ and their inverses. Since it is also closed upon conjugation by products, it has to be closed upon conjugation by elements of $\Gen{}{C}$.

    \underline{\emph{(2)}$\rimp$\emph{(1)}}: For any $a,b,c\in C$ let $a\star b := aba^{-1} = \phi_a\restriction_C(b)\in C$. It holds that $a\star a = aaa^{-1} = a$, and since $a^{-1}\in \Gen{}{C}$, there is a well-defined map $\phi_{a^{-1}}\restriction_C:C\to C$ inverse to $L_a = \phi_a\restriction_C$. We now finish by proving that $L_a$ is also a~homomorphism:\begin{align*}
        L_a(b\star c) = \phi_a\restriction_C(bcb^{-1}) = \phi_a\restriction_C(b)\phi_a\restriction_C(c)\phi_a\restriction_C(b)^{-1}\\ = L_a(b)L_a(c)L_a(b)^{-1} = L_a(b)\star L_a(c).
    \end{align*}

    \underline{\emph{(2)}$\rimp$\emph{(3)}}: Since $C$ is closed upon conjugation by elements of the group $\Gen{}{C}$, $C = \bigcup_{c\in C}\{hch^{-1}\mid h\in \Gen{}{C}\}$ is a union of conjugacy classes of $\Gen{}{C}$.

    \underline{\emph{(3)}$\rimp$\emph{(2)}}: Conjugacy classes of the group $\Gen{}{C}$ are closed upon conjugation by elements of $\Gen{}{C}$, and such a property is preserved upon unions.
\end{proof}

We may also note that $C$ and $\Alg{G}$ define such conjugation quandle uniquely. On the other hand, when $\Alg{C}$ is a conjugation quandle over a group $\Alg{G}$, then the same quandle is realized over the group $\Alg{H}:= \Gen{}{C}$. Theorem \ref{theorem: classification} especially says that for a group $\Alg{G}$ and its element $e$, there is a conjugation quandle over $\Alg{G}$ on the conjugacy class $\cl{\Alg{G}}{e}:= \{geg^{-1}\mid g\in G\}$. This quandle shall be denoted $\Cl{\Alg{G}}{e}$.

\begin{theorem}\label{theorem: connectedness criterion}
    Let $\Alg{C}$ be a conjugation quandle over a group $\Alg{G}$, then TFAE
    \begin{enumerate}
        \item quandle $\Alg{C}$ is connected,
        \item $C$ is a conjugacy class of the group $\Gen{}{C}$.
    \end{enumerate}
\end{theorem}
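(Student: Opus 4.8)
Write $\Alg{H}:=\Gen{}{C}$. The plan is to reduce both implications to the single observation, supplied by Lemma \ref{lemma: left translations}, that $\LMlt{\Alg{C}} = \Inn{\Alg{H}}\restriction_C$, so that for $a\in C$ the orbit of $a$ under the multiplication group is
\[
    \{\varphi(a)\mid \varphi\in\LMlt{\Alg{C}}\} \;=\; \{hah^{-1}\mid h\in H\} \;=\; \cl{\Alg{H}}{a}\cap C.
\]
By Theorem \ref{theorem: classification}, $C$ is closed under conjugation by elements of $\Alg{H}$, so $\cl{\Alg{H}}{a}\subseteq C$ for every $a\in C$, and the orbit of $a$ is therefore exactly the conjugacy class $\cl{\Alg{H}}{a}$. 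Connectedness of $\Alg{C}$ is by definition transitivity of this action, i.e.\ the existence of a single orbit, so the equivalence amounts to: $C$ is one $\Alg{H}$-conjugacy class iff it is a union of $\Alg{H}$-conjugacy classes consisting of only one of them.

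For \emph{(2)}$\rimp$\emph{(1)}: if $C=\cl{\Alg{H}}{e}$ for some $e$, then any $a,b\in C$ are each conjugate to $e$ in $\Alg{H}$, hence to one another; explicitly, picking $g,h\in H$ with $geg^{-1}=a$ and $heh^{-1}=b$ gives $hg^{-1}\in H$ with $(hg^{-1})a(hg^{-1})^{-1}=b$, so $L$-translations (composed appropriately) carry $a$ to $b$ and $\LMlt{\Alg{C}}$ is transitive.

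For \emph{(1)}$\rimp$\emph{(2)}: assuming transitivity, fix any $e\in C$ (nonempty by Theorem \ref{theorem: classification}). The orbit computation above shows $\cl{\Alg{H}}{e}\subseteq C$, and transitivity gives, for each $b\in C$, some $\varphi\in\LMlt{\Alg{C}}=\Inn{\Alg{H}}\restriction_C$ with $\varphi(e)=b$, i.e.\ $b\in\cl{\Alg{H}}{e}$; hence $C\subseteq\cl{\Alg{H}}{e}$ and $C=\cl{\Alg{H}}{e}$ is a single conjugacy class of $\Alg{H}$.

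I expect no serious obstacle here: the only point requiring care is keeping the acting group equal to $\Gen{}{C}$ rather than $\Alg{G}$ throughout, and invoking Theorem \ref{theorem: classification} to guarantee $C$ is nonempty and conjugation-closed in $\Alg{H}$ so that orbits really coincide with full conjugacy classes. Everything else is a direct unwinding of the definition of connectedness via Lemma \ref{lemma: left translations}.
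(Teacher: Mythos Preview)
Your proof is correct and follows essentially the same route as the paper: both directions hinge on the identity $\LMlt{\Alg{C}}=\Inn{\Gen{}{C}}\restriction_C$ from Lemma \ref{lemma: left translations} together with Theorem \ref{theorem: classification} to ensure $C$ is conjugation-closed in $\Gen{}{C}$. The only cosmetic difference is that you package the key observation as ``orbit under $\LMlt{\Alg{C}}$ equals $\cl{\Alg{H}}{a}$'' once at the start, whereas the paper unfolds it separately in each implication (and in \emph{(2)}$\rimp$\emph{(1)} explicitly writes $h\in\Gen{}{C}$ as a word in generators to exhibit the corresponding element of $\LMlt{\Alg{C}}$).
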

\begin{proof}

    \underline{\emph{(1)}$\rimp$\emph{(2)}}: We may pick $e\in C$. Due to Theorem \ref{theorem: classification}, $C$ is a union of conjugacy classes of $\Gen{}{C}$, and so $\cl{\Gen{}{C}}{e}\subseteq C$. We shall now prove that also $C \subseteq \cl{\Gen{}{C}}{e}$. Let there be $c\in C$. Since $\Alg{C}$ is connected, there is $\varphi\in \LMlt{\Alg{C}}$ such that $c = \varphi(e)$. Using the equality $\LMlt{\Alg{C}} = \Inn{\Gen{}{C}}\restriction_C$, there is $h\in \Gen{}{C}$ such that $$c = \varphi(e) = \phi_h\restriction_C(e) = \phi_h(e)\in \cl{\Gen{}{C}}{e}.$$

    \underline{\emph{(2)}$\rimp$\emph{(1)}}:
    Pick $e\in C$. It is enough to show that for any $c\in C$, there is $\psi\in \LMlt{\Alg{C}}$ for which $c = \psi(e)$: then for any $a,b\in C$, there are $\psi_1, \psi_2\in\LMlt{\Alg{C}}$ such that $a = \psi_1(e)$ and $b = \psi_2(e)$; thus, $b = \psi_2\circ\psi_1^{-1}(a)$.

    For a given $c\in C = \cl{\Alg{\Gen{}{C}}}{e}$, there is $h\in \Gen{}{C}$ such that $c =\phi_h(e)$. Using generators, we may express $h$ in the form $\Pi_{i=1}^nc_i^{s_i}$ for some $n\in \bb{N}, c_i\in C, s_i\in \{\pm 1\}$ to obtain \begin{align*}c = \phi_{\Pi_{i=1}^nc_i^{s_i}}(e) = \phi_{{c_1}^{s_1}}\circ\cdots\circ\phi_{{c_n}^{s_n}}(e) = \phi_{{c_1}^{s_1}}\restriction_C\circ\cdots\circ\phi_{{c_n}^{s_n}}\restriction_C(e)\\= L_{c_1}^{s_1}\circ\cdots\circ L_{c_n}^{s_n}(e).\end{align*}
\end{proof}

\begin{corollary}
    Given an element $e$ of a simple group $\Alg{G}$, the quandle $\Cl{G}{e}$ is connected.
\end{corollary}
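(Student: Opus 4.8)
The plan is to read this off Theorem~\ref{theorem: connectedness criterion}. Writing $C := \cl{\Alg{G}}{e}$ for the ground set of $\Cl{G}{e}$, that theorem reduces the claim to showing that $C$ is a conjugacy class of the group $\Gen{}{C}$.

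The key step is to note that $\Gen{}{C}$ is a normal subgroup of $\Alg{G}$. Indeed, $C = \cl{\Alg{G}}{e}$ is a single conjugacy class of $\Alg{G}$, hence $gCg^{-1} = C$ for every $g \in G$; applying the automorphism $\phi_g$ to a generating set then gives $g\Gen{}{C}g^{-1} = \Gen{}{C}$. With this in hand I would invoke the simplicity of $\Alg{G}$ and distinguish two cases. If $\Gen{}{C} = \{1\}$, then $C \subseteq \{1\}$, so $C = \{1\}$ (it is nonempty), which is exactly the conjugacy class of $1$ in $\Gen{}{C}$. Otherwise $\Gen{}{C}$ is a nontrivial normal subgroup of the simple group $\Alg{G}$, so $\Gen{}{C} = \Alg{G}$, and then $C = \cl{\Alg{G}}{e} = \cl{\Gen{}{C}}{e}$ is a conjugacy class of $\Gen{}{C}$. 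In both cases condition~(2) of Theorem~\ref{theorem: connectedness criterion} is met, so $\Cl{G}{e}$ is connected.

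I do not expect any real obstacle here: the argument is a two-line consequence of the connectedness criterion. The only thing worth isolating is the normality of $\Gen{}{C}$ in $\Alg{G}$ — this is precisely the feature that makes simplicity applicable — together with the harmless bookkeeping of the degenerate case $e = 1$, where $\Cl{G}{e}$ is the one-element quandle and is connected for trivial reasons.
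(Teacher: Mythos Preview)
Your proposal is correct and follows essentially the same route as the paper: you observe that $\Gen{}{C}$ is normal in $\Alg{G}$, use simplicity to force $\Gen{}{C}\in\{\Alg{1},\Alg{G}\}$, and then appeal to Theorem~\ref{theorem: connectedness criterion}. The only cosmetic difference is that the paper phrases the dichotomy in terms of $|C|$ (one element versus at least two) and dispatches the singleton case by noting that a one-element quandle is trivially connected, whereas you feed both cases through condition~(2) of the theorem.
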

\begin{proof}
    The subgroup $\Gen{}{\cl{\Alg{G}}{e}}$ is normal in $\Alg{G}$. If the quandle contains at least two elements, then $\Gen{}{\cl{\Alg{G}}{e}} = \Alg{G}$, and by Theorem \ref{theorem: connectedness criterion}, it is connected. Otherwise, it has only one element, and so it is connected trivially.
\end{proof}

In the paper, quandles of the form $\Cl{\Alg{G}}{e}$ occur. Be warned that despite Theorem \ref{theorem: connectedness criterion} being an equivalence, it is possible for such quandles to not be connected when a condition $\Alg{G} = \Gen{}{\cl{\Alg{G}}{C}}$ is not met. To show this is really the case, in Example \ref{example: connected and not connected}, we prove that the conjugacy class of $\Alg{G}$ may or may not split into more conjugacy classes of its subgroup $\Gen{}{\cl{\Alg{G}}{C}}$. Nevertheless, even in the not connected conjugation quandles representable as $\Cl{\Alg{G}}{e}$, all the left translations have the same cycle structure: for any $c,c'\in \cl{\Alg{G}}{e}$ there is $g\in G$ such that $c' = \phi_g(c)$. It can be verified that the map $\phi':=\phi_g\restriction_{\cl{\Alg{G}}{e}}$ is a bijection on $\cl{\Alg{G}}{e}$, and since $\phi'(a\star b) = \phi_g(aba^{-1})= \phi_g(a)\phi_g(b)\phi_g(a)^{-1} = \phi'(a)\star\phi'(b)$, it is an automorphism of $\Cl{\Alg{G}}{e}$. Thus, $L_{c'} = L_{\phi'(c)} = \phi'\circ L_c\circ (\phi')^{-1}$ by Eq. \ref{equation: left translations are conjugate}.

\begin{example}\label{example: connected and not connected}
    Recall that in $\Sym{n}$, permutations are conjugate if and only if they have the same cycle structure, and that for $n\geq 3$, the alternating group $\Alt{n}$ is generated by its subset of all $3$-cycles. The conjugation quandle $\Cl{\Sym{5}}{(1\ 2\ 3)}$ is connected because $\Gen{}{\cl{\Sym{5}}{(1\ 2\ 3)}} = \Alt{5}$, and the conjugacy class $\cl{\Sym{5}}{(1\ 2\ 3)} = \cl{\Alt{5}}{(1\ 2\ 3)}$ does not split. On the other hand, $\Cl{\Sym{4}}{(1\ 2\ 3)}$ is not connected, since $\Gen{}{\cl{\Sym{4}}{(1\ 2\ 3)}} = \Alt{4}$ and the conjugacy class $\cl{\Sym{4}}{(1\ 2\ 3)} = \cl{\Alt{4}}{(1\ 2\ 3)}\dot{\cup}\cl{\Alt{4}}{(1\ 3\ 2)}$ splits.
\end{example}

\section{Regular Cycles of Left Translations}

In this section, we will study what it means for a left translation to have a regular cycle in conjugation quandles. We will be able to find a complete characterization based on the structure of the defining group.
\begin{lemma}\label{lemma: regular cycle}
    If $\Alg{Q}$ is a finite quandle and $\pi \in \LMlt{\Alg{Q}}$, then TFAE
    \begin{enumerate}
        \item permutation $\pi$ contains a regular cycle,
        \item there is $z\in Q$ such that $\left(\Gen{}{\pi}\right)_z= \Alg{1}$.
    \end{enumerate}
\end{lemma}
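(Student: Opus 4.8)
The plan is to unwind both conditions in terms of the cyclic group $\Gen{}{\pi}$ acting on $Q$ and its orbits. First I would recall that for a finite permutation $\pi$, the orbits of $\Gen{}{\pi}$ on $Q$ are exactly the cycles of $\pi$, and if $x$ lies in a cycle of length $\lambda$, then by the Remark in the Preliminaries, $\lambda$ is the smallest positive $k$ with $\pi^k(x)=x$; equivalently, the stabilizer $(\Gen{}{\pi})_x$ is the subgroup of $\Gen{}{\pi}$ generated by $\pi^\lambda$, which has index $\lambda$ in $\Gen{}{\pi}$. Writing $m := |\Gen{}{\pi}| = \ord{}{\pi}$ (finite since $Q$ is finite), I would note that $m = \operatorname{lcm}(\lambda_1,\dots,\lambda_t)$ where $(\lambda_1^{\alpha_1},\dots,\lambda_t^{\alpha_t})$ is the cycle structure of $\pi$, because $\pi^k = \operatorname{id}$ iff every cycle length divides $k$.

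For the direction (1)$\Rightarrow$(2): if $\pi$ contains a regular cycle, then by definition $\lambda_i \mid \lambda_t$ for all $i$, so $\operatorname{lcm}(\lambda_1,\dots,\lambda_t) = \lambda_t = m$. Pick $z$ to be any element lying in a cycle of maximal length $\lambda_t$. Then $(\Gen{}{\pi})_z$ has index $\lambda_t = m$ in $\Gen{}{\pi}$, hence is trivial. For the converse (2)$\Rightarrow$(1): suppose $(\Gen{}{\pi})_z = \Alg{1}$ for some $z$, and let $\lambda$ be the length of the cycle of $\pi$ containing $z$. Then $\Gen{}{\pi}$ acts freely on that orbit, so by orbit–stabilizer $\lambda = |\Gen{}{\pi}| = m = \operatorname{lcm}(\lambda_1,\dots,\lambda_t)$. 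Since $\lambda$ is one of the $\lambda_i$ and every $\lambda_i$ divides $m = \lambda$, this is precisely a regular cycle; moreover $\lambda$ must equal $\lambda_t$, the maximal cycle length.

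I do not expect a genuine obstacle here — the statement is essentially a reformulation of orbit–stabilizer plus the observation that "regular cycle" means "a cycle whose length equals the lcm of all cycle lengths, i.e. the order of $\pi$." The only mild subtlety is making sure the hypothesis $\pi \in \LMlt{\Alg{Q}}$ is used only to guarantee $\pi$ is a genuine permutation of the finite set $Q$ (so that $\Gen{}{\pi}$ is finite and its orbits are the cycles of $\pi$); no quandle-specific structure is needed for this particular lemma. I would state the argument cleanly by isolating the single fact "$z$ lies in a regular cycle of $\pi$ $\iff$ $(\Gen{}{\pi})_z = \Alg{1}$" and then quantifying over $z$.
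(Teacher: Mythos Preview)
Your proposal is correct and follows essentially the same approach as the paper: pick $z$ in a longest cycle to show the stabilizer is trivial, and conversely use that a trivial stabilizer forces the cycle length of $z$ to equal $\ord{}{\pi}$, hence to be divisible by every $\lambda_i$. The only difference is cosmetic---you phrase things via orbit--stabilizer and the lcm characterization of $\ord{}{\pi}$, while the paper argues directly that $\pi^\ell = \operatorname{id}$ for $\ell$ the cycle length of $z$.
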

\begin{proof}
    \underline{\emph{(1)}$\rimp$\emph{(2)}}: If we choose $z\in Q$ to be from any of the longest (\emph{regular}) cycles of $\pi$, then $\pi^k(z) = z$ already implies $\pi^k = \operatorname{id}$.

    \underline{\emph{(2)}$\rimp$\emph{(1)}}: Let $\ell$ be the length of the cycle of such $z$ in $\pi$. If $a$ is an element of any other cycle, then its length divides $\ell$ because $\pi^{\ell}(a) = \operatorname{id}(a) = a$.
\end{proof}



The following is a part of Lemma 1.9. from \cite{lemma}.
\begin{lemma}\label{lemma: chain}
    Let $\Alg{C}$ be a conjugation quandle over a group $\Alg{G}$. If we denote $\Alg{H}:= \Gen{}{C}$, then there is a group isomorphism\begin{gather*}\psi:\slant{\Alg{H}}{\operatorname{Z}(\Alg{H})}\to\Inn{\Alg{H}}\restriction_C = \LMlt{\Alg{C}},\\ h\operatorname{Z}(\Alg{H})\mapsto \phi_h\restriction_C.\end{gather*}
\end{lemma}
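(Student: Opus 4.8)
The plan is to realize $\psi$ as a composition of two isomorphisms. First, recall the classical fact: the map $\Alg{H}\to\Inn{\Alg{H}}$, $h\mapsto\phi_h$, is a surjective group homomorphism (because $\phi_{hh'}=\phi_h\circ\phi_{h'}$) whose kernel is exactly $\operatorname{Z}(\Alg{H})$, so by the first isomorphism theorem $h\operatorname{Z}(\Alg{H})\mapsto\phi_h$ is an isomorphism $\slant{\Alg{H}}{\operatorname{Z}(\Alg{H})}\to\Inn{\Alg{H}}$. It then remains to compose with the restriction map $\rho:\Inn{\Alg{H}}\to\Inn{\Alg{H}}\restriction_C$, $\phi_h\mapsto\phi_h\restriction_C$, and to check that $\rho$ is also an isomorphism; the asserted equality $\Inn{\Alg{H}}\restriction_C=\LMlt{\Alg{C}}$ is just Lemma \ref{lemma: left translations}(2) together with $\Alg{H}=\Gen{}{C}$.

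The central step is that $\rho$ is an isomorphism. Surjectivity onto $\LMlt{\Alg{C}}$ is immediate from the way its codomain was defined. For $\rho$ to even make sense and be a homomorphism, I would invoke Lemma \ref{lemma: left translations} (ultimately Theorem \ref{theorem: classification}): each $\phi_c$, hence each $\phi_h$ with $h\in\Alg{H}=\Gen{}{C}$, restricts to a permutation of $C$, so restriction commutes with composition, $(\phi_h\circ\phi_{h'})\restriction_C=\phi_h\restriction_C\circ\phi_{h'}\restriction_C$. Injectivity of $\rho$ is the one genuinely substantive point: if $\phi_h\restriction_C=\phi_{h'}\restriction_C$, then $\phi_h$ and $\phi_{h'}$ are two automorphisms of $\Alg{H}$ agreeing on the generating set $C$, hence agree on all of $\Alg{H}$, so $\phi_h=\phi_{h'}$ in $\Inn{\Alg{H}}$.

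Composing the two isomorphisms yields $\psi$ with the stated rule. Equivalently, one can bypass the factorization and verify well-definedness and injectivity of $\psi$ at once through the chain
\begin{align*}
h\operatorname{Z}(\Alg{H})=h'\operatorname{Z}(\Alg{H})
&\iff h^{-1}h'\in\operatorname{Z}(\Alg{H})
\iff h^{-1}h'\ \text{commutes with every}\ c\in C\\
&\iff hch^{-1}=h'ch'^{-1}\ \text{for all}\ c\in C
\iff \phi_h\restriction_C=\phi_{h'}\restriction_C,
\end{align*}
the middle equivalence being exactly where $\Gen{}{C}=\Alg{H}$ is used, with the homomorphism property and surjectivity handled as above. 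I do not anticipate a real obstacle here; the only thing to be careful about is that the objects appearing in the statement are well defined, i.e., that conjugation by an element of $\Alg{H}$ indeed preserves $C$, which is guaranteed by Theorem \ref{theorem: classification}.
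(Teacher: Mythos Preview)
Your proposal is correct and follows essentially the same route as the paper: the paper applies the first isomorphism theorem directly to the surjection $\Alg{H}\to\Inn{\Alg{H}}\restriction_C$, $h\mapsto\phi_h\restriction_C$, and computes its kernel to be $\operatorname{Z}(\Alg{H})$ using exactly the observation that commuting with every element of $C$ is equivalent to commuting with every element of $\Gen{}{C}=\Alg{H}$. Your factorization through $\Inn{\Alg{H}}$ is a minor organizational variant of the same argument, and the direct chain of equivalences you offer at the end is precisely the paper's computation of the kernel.
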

\begin{proof}
    The map $\psi':\Alg{H}\to\Inn{\Alg{H}}\restriction_C$ defined by $h\mapsto \phi_h\restriction_C$ is a surjective group homomorphism. It holds that $h\in \operatorname{Ker}(\psi')$ if and only if $h\in H$ commutes with every element of $C$. The latter happens if and only if it commutes with every element of $\Gen{}{C} = \Alg{H}$
    ; thus, $\operatorname{Ker}(\psi') = \operatorname{Z}(\Alg{H})$. By the first isomorphism theorem, there is an induced isomorphism $\psi:\slant{\Alg{H}}{\operatorname{Z}(\Alg{H})}\to\Inn{\Alg{H}}\restriction_C$ such that $h\operatorname{Z}(\Alg{H})\mapsto \phi_h\restriction_C$.

\end{proof}

The preceding two lemmas serve to prove the following proposition.
\begin{proposition}\label{prop: centralizer property general}
    If $\Alg{C}$ is a finite conjugation quandle over a group $\Alg{G}$, and we denote $\Alg{H}:= \Gen{}{C}$, then for any $c\in C$, TFAE
    \begin{enumerate}
        \item $L_c$ in $\Alg{C}$ contains a regular cycle,
        \item there is $z\in C$ such that $\Gen{}{c}\cap\Cent{\Alg{H}}{z} \leq \operatorname{Z}(\Alg{H})$.
    \end{enumerate}
\end{proposition}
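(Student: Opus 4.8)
The plan is to translate the quandle-theoretic statement about $L_c$ into a purely group-theoretic statement about the action of $\Inn{\Alg{H}}\restriction_C$ on $C$, and then apply Lemma \ref{lemma: regular cycle}. First I would note that by Lemma \ref{lemma: left translations}, $L_c = \phi_c\restriction_C$, so $\Gen{}{L_c} = \Gen{}{\phi_c\restriction_C}$ is the cyclic subgroup of $\LMlt{\Alg{C}}$ generated by $\phi_c\restriction_C$. By Lemma \ref{lemma: regular cycle} applied to the finite quandle $\Alg{C}$ and $\pi = L_c$, statement \emph{(1)} is equivalent to the existence of $z\in C$ with $\left(\Gen{}{L_c}\right)_z = \Alg{1}$, i.e. the stabilizer of $z$ inside $\Gen{}{\phi_c\restriction_C}$ is trivial.

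Next I would unwind what $\left(\Gen{}{\phi_c\restriction_C}\right)_z = \Alg{1}$ means. An element of $\Gen{}{\phi_c\restriction_C}$ has the form $\phi_{c^k}\restriction_C$ for some $k\in\bb{Z}$, and it fixes $z\in C$ precisely when $c^k z c^{-k} = z$, i.e. $c^k\in\Cent{\Alg{H}}{z}$. On the other hand, $\phi_{c^k}\restriction_C$ is the identity of $\LMlt{\Alg{C}}$ precisely when $c^k$ commutes with every element of $C$, which (as in the proof of Lemma \ref{lemma: chain}) is the same as $c^k\in\operatorname{Z}(\Alg{H})$. So the condition $\left(\Gen{}{\phi_c\restriction_C}\right)_z = \Alg{1}$ says: for every $k\in\bb{Z}$, if $c^k\in\Cent{\Alg{H}}{z}$ then $c^k\in\operatorname{Z}(\Alg{H})$. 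Since $\{c^k\mid k\in\bb{Z}\} = \Gen{}{c}$, this is exactly the inclusion $\Gen{}{c}\cap\Cent{\Alg{H}}{z}\leq\operatorname{Z}(\Alg{H})$, which is statement \emph{(2)}.

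To make the last equivalence fully rigorous I would use the isomorphism $\psi$ of Lemma \ref{lemma: chain}, which identifies $\LMlt{\Alg{C}}$ with $\slant{\Alg{H}}{\operatorname{Z}(\Alg{H})}$ via $\phi_h\restriction_C\mapsto h\operatorname{Z}(\Alg{H})$; under $\psi$ the subgroup $\Gen{}{\phi_c\restriction_C}$ corresponds to the cyclic subgroup $\Gen{}{c\operatorname{Z}(\Alg{H})}$ of the quotient, and its stabilizer of $z$ corresponds to $\{c^k\operatorname{Z}(\Alg{H})\mid c^k\in\Cent{\Alg{H}}{z}\}$, which is trivial in the quotient exactly when $\Gen{}{c}\cap\Cent{\Alg{H}}{z}\subseteq\operatorname{Z}(\Alg{H})$. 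Assembling: \emph{(1)} $\iff$ ($\exists z\in C$)($\left(\Gen{}{L_c}\right)_z=\Alg{1}$) $\iff$ ($\exists z\in C$)($\Gen{}{c}\cap\Cent{\Alg{H}}{z}\leq\operatorname{Z}(\Alg{H})$) $\iff$ \emph{(2)}.

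I do not expect a genuine obstacle here; the proposition is essentially a dictionary between Lemma \ref{lemma: regular cycle} and Lemma \ref{lemma: chain}. The only point requiring a little care is the bookkeeping between the two descriptions of ``$\phi_{c^k}\restriction_C$ is trivial'' — one must invoke that commuting with all of $C$ is equivalent to commuting with all of $\Alg{H}=\Gen{}{C}$, which is precisely the kernel computation already carried out in the proof of Lemma \ref{lemma: chain}. Everything else is routine unwinding of definitions of stabilizers and cyclic subgroups.
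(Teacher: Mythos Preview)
Your proposal is correct and follows essentially the same route as the paper: apply Lemma \ref{lemma: regular cycle} to reduce \emph{(1)} to the triviality of $\left(\Gen{}{L_c}\right)_z$, and then use the kernel computation underlying Lemma \ref{lemma: chain} to translate this into $\Gen{}{c}\cap\Cent{\Alg{H}}{z}\leq\operatorname{Z}(\Alg{H})$. Your direct argument in the second paragraph (working with the concrete representatives $c^k$) is in fact a touch cleaner than the paper's version, which computes $\psi^{-1}\left(\left(\Gen{}{L_c}\right)_z\right)$ as an intersection of two coset families and then has to check that the coset representative can be taken in $\Gen{}{c}\cap\Cent{\Alg{H}}{z}$; your formulation bypasses that bookkeeping.
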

\begin{proof}
    Let $\psi$ be the isomorphism from Lemma \ref{lemma: chain}. For any $z\in C$ it holds that
    \begin{align*}
        \psi^{-1}\left(\left(\Gen{}{L_c}\right)_z\right) = \psi^{-1}\left(\Gen{}{L_c}\cap \LMlt{\Alg{C}}_z\right)= \psi^{-1}\left(\Gen{}{L_c}\right) \cap \psi^{-1}\left(\LMlt{\Alg{C}}_z\right)\\= \Gen{}{\psi^{-1}(\phi_c\restriction_C)}\cap \psi^{-1}\left(\left(\Inn{\Alg{H}}\restriction_C\right)_z\right) \\=\Gen{}{c\operatorname{Z}(\Alg{H})}\cap \left\{\psi^{-1}(\phi_h\restriction_C)\mid h\in H, \phi_h\restriction_C(z) = z\right\}\\=  \left\{x\operatorname{Z}(\Alg{H})\mid x\in \Gen{}{c}\right\}\cap \left\{h\operatorname{Z}(\Alg{H})\mid h\in \Cent{\Alg{H}}{z}\right\}.
    \end{align*}

    Given $x\operatorname{Z}(\Alg{H}) = h\operatorname{Z}(\Alg{H})$, an element of this intersection, we may find a central element $g\in \operatorname{Z}(\Alg{H})$ such that $h = xg$ and since $z\in C\subseteq H$, it holds $$\phi_x(z) = xzx^{-1} = (xg)z(xg)^{-1} = \phi_h(z) = z.$$ Thus, also $x\in\Cent{\Alg{H}}{z}$ and we conclude that\footnote{skipped inclusion is trivial}
    $$\psi^{-1}\left(\left(\Gen{}{L_c}\right)_z\right) = \left\{x\operatorname{Z}(\Alg{H})\mid x\in\Gen{}{c}\cap \Cent{\Alg{H}}{z} \right\}.$$

    By Lemma \ref{lemma: regular cycle}, $L_c$ contains a regular cycle if and only if there is $z\in C$ such that $\left(\Gen{}{L_c}\right)_z = \Alg{1}$. This happens if and only if there is $z\in C$ such that $\psi^{-1}\left(\left(\Gen{}{L_c}\right)_z\right) = \Alg{1}$, i.e., $\emptyset \neq \Gen{}{c}\cap \Cent{\Alg{H}}{z}\leq \operatorname{Z}(\Alg{H})$.
\end{proof}

\begin{corollary}\label{cor: regular cycle characterization for Cl}
    Let $e$ be an element of a group $\Alg{G}$ such that the quandle $\Cl{\Alg{G}}{e}$ is finite. If we denote $\Alg{H}:=\Gen{}{\cl{\Alg{G}}{e}}$, then TFAE
    \begin{enumerate}
        \item there is a left translation in $\Cl{\Alg{G}}{e}$ containing a regular cycle,
        \item there are $c,z\in \cl{\Alg{G}}{e}$ such that $\Gen{}{c}\cap\Cent{\Alg{H}}{z} \leq \operatorname{Z}(\Alg{H})$,
        \item for every $c\in \cl{\Alg{G}}{e}$ there is $z\in \cl{\Alg{G}}{e}$ such that $\Gen{}{c}\cap\Cent{\Alg{H}}{z} \leq\operatorname{Z}(\Alg{H})$,
        \item quandle $\Cl{\Alg{G}}{e}$ has the Hayashi property.
    \end{enumerate}
\end{corollary}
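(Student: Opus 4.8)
The plan is to derive all four equivalences from Proposition \ref{prop: centralizer property general} together with the observation, already recorded in the paragraph following Theorem \ref{theorem: connectedness criterion}, that all left translations of $\Cl{\Alg{G}}{e}$ share one common cycle structure.

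First I would note that, by Theorem \ref{theorem: classification}, $\Cl{\Alg{G}}{e}$ really is a finite conjugation quandle over $\Alg{G}$ whose ground set is $C:=\cl{\Alg{G}}{e}$ and which satisfies $\Gen{}{C}=\Alg{H}$. Hence Proposition \ref{prop: centralizer property general} applies verbatim: for each fixed $c\in C$, the translation $L_c$ contains a regular cycle if and only if there is $z\in C$ with $\Gen{}{c}\cap\Cent{\Alg{H}}{z}\leq\operatorname{Z}(\Alg{H})$. Applying this equivalence to a single witnessing element gives (1)$\rlimp$(2), and applying it simultaneously to every $c\in C$ gives (4)$\rlimp$(3) once we know (4) amounts to ``every $L_c$ contains a regular cycle''.

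Next I would close the loop via (1)$\rlimp$(4). Whether a permutation contains a regular cycle is a property of its cycle structure only (it is the divisibility condition $\lambda_i\mid\lambda_t$ in Definition on cycle structure). Since all left translations of $\Cl{\Alg{G}}{e}$ have the same cycle structure, the assertions ``some $L_c$ contains a regular cycle'' and ``every $L_c$ contains a regular cycle'' coincide; the latter is by definition the Hayashi property. This yields (1)$\rlimp$(4), and together with the previous paragraph produces the chain (2)$\rlimp$(1)$\rlimp$(4)$\rlimp$(3), proving all four statements equivalent.

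The only point needing care is the sameness of cycle structure when $\Cl{\Alg{G}}{e}$ is not connected, so that Theorem \ref{theorem: connectedness criterion} cannot be invoked. Here I would simply reuse the argument given just before Example \ref{example: connected and not connected}: for $c,c'\in\cl{\Alg{G}}{e}$ pick $g\in G$ with $c'=\phi_g(c)$; then $\phi':=\phi_g\restriction_{\cl{\Alg{G}}{e}}$ is a bijection of $\cl{\Alg{G}}{e}$ that respects $\star$, hence an automorphism of $\Cl{\Alg{G}}{e}$, and by Eq. \ref{equation: left translations are conjugate} it conjugates $L_c$ to $L_{c'}$, so the two permutations have the same cycle structure. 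Everything else is a routine translation between the four formulations.
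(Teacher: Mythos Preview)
Your proposal is correct and follows essentially the same approach as the paper: it derives (1)$\rlimp$(4) from the observation (recorded in the last paragraph of Section \ref{sec: conjugation quandles}) that all left translations of $\Cl{\Alg{G}}{e}$ share a common cycle structure, and then obtains (1)$\rlimp$(2) and (3)$\rlimp$(4) directly from Proposition \ref{prop: centralizer property general}. Your version is simply more explicit about why Proposition \ref{prop: centralizer property general} applies and about the non-connected case, but the underlying argument is identical.
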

\begin{proof}
    We have observed in the last paragraph of Section \ref{sec: conjugation quandles} that \emph{(1)}$\rlimp$\emph{(4)}. Follow up by applying Proposition \ref{prop: centralizer property general} to get \emph{(1)}$\rlimp$\emph{(2)} and \emph{(3)}$\rlimp$\emph{(4)}.
\end{proof}

\vspace{1ex}
\noindent\textbf{Remark.} Proving \emph{(2)}$\rimp$\emph{(3)} using construction can be a nice exercise. Provided that there is one such pair $c,z\in \cl{\Alg{G}}{e}$, for given $c' = \phi_g(c)\in\cl{\Alg{G}}{e}$, we show that if $(c')^k\in \Cent{\Alg{H}}{\phi_g(z)}$, then $c^k\in \Gen{}{c}\cap \Cent{\Alg{H}}{z}\subseteq \operatorname{Z}(\Alg{H})$. Using the normality of $\Gen{}{C}$ and the centrality of $c^k$, it can now be verified that $(c')^k\in \operatorname{Z}(\Alg{H})$, and so we see that we can put $z':=\phi_g(z)$.
\vspace{1ex}

For brevity, we introduce a new property.
\begin{definition}
    We say that a finite conjugacy class $C$ of a group $\Alg{G}$ is \emph{good} in $\Alg{G}$ if for every $c\in C$ there is $z\in C$ such that $\Gen{}{c}\cap \Cent{\Alg{G}}{z}\leq \operatorname{Z}(\Gen{}{C})$. Moreover, we say that the group $\Alg{G}$ is \emph{good} if all of its finite conjugacy classes are good.
\end{definition}

By Theorem \ref{theorem: connectedness criterion}, every connected quandle over a group $\Alg{G}$ is of the form $\Cl{\Alg{G}}{e}$. So, in the provided terminology, Corollary \ref{cor: regular cycle characterization for Cl} especially says the following.
\begin{corollary}
    If $\Alg{G}$ is a good group, then every finite quandle of the form $\cl{\Alg{G}}{e}$ has the Hayashi property. In that case, all connected conjugation quandles over the group $\Alg{G}$ satisfy Hayashi's conjecture (Conjecture \ref{conjecture: Hayashi}).
\end{corollary}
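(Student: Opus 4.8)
The plan is to read both assertions off Corollary~\ref{cor: regular cycle characterization for Cl} once the definition of a good group has been unwound; the only genuine content is to reconcile the centralizer $\Cent{\Alg{G}}{z}$ occurring in that definition with the centralizer $\Cent{\Alg{H}}{z}$ occurring in the corollary, where $\Alg{H}:=\Gen{}{\cl{\Alg{G}}{e}}$. For the first sentence, fix $e\in G$ with $\Cl{\Alg{G}}{e}$ finite and set $C:=\cl{\Alg{G}}{e}$, $\Alg{H}:=\Gen{}{C}$. Since $\Alg{G}$ is good and $C$ is one of its finite conjugacy classes, for every $c\in C$ there is $z\in C$ with $\Gen{}{c}\cap\Cent{\Alg{G}}{z}\leq\operatorname{Z}(\Alg{H})$. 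I would then record the one elementary observation needed: because $c\in C\subseteq H$ we have $\Gen{}{c}\leq\Alg{H}$, and because $z\in H$ we have $\Cent{\Alg{H}}{z}=\Cent{\Alg{G}}{z}\cap H$, hence $\Gen{}{c}\cap\Cent{\Alg{G}}{z}=\Gen{}{c}\cap\Cent{\Alg{H}}{z}$; moreover $\operatorname{Z}(\Alg{H})=\operatorname{Z}(\Gen{}{C})$ by the definition of $\Alg{H}$. Thus the good-class condition is literally clause~\emph{(3)} of Corollary~\ref{cor: regular cycle characterization for Cl}, so its equivalent clause~\emph{(4)} holds and $\Cl{\Alg{G}}{e}$ has the Hayashi property.

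For the second sentence, I would invoke the assertion recorded just before the corollary --- that, via Theorem~\ref{theorem: connectedness criterion}, every connected conjugation quandle over $\Alg{G}$ is of the form $\Cl{\Alg{G}}{e}$ --- together with the fact that, for a finite connected quandle, ``satisfying Hayashi's conjecture'' is by the very statement of Conjecture~\ref{conjecture: Hayashi} the same as ``having the Hayashi property'' (Hayashi's conjecture concerns only finite connected quandles, so for the others there is nothing to prove). The first sentence then applies to each such finite quandle and completes the argument.

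I expect the main --- essentially the only --- obstacle to be the bookkeeping in the first paragraph: everything hinges on noticing that $\Gen{}{c}$ already lies inside $\Alg{H}=\Gen{}{C}$, which is what collapses $\Gen{}{c}\cap\Cent{\Alg{G}}{z}$ to $\Gen{}{c}\cap\Cent{\Alg{H}}{z}$ and matches the good-group hypothesis verbatim with clause~\emph{(3)} of Corollary~\ref{cor: regular cycle characterization for Cl}. A secondary point worth stating carefully is that a connected conjugation quandle ``over $\Alg{G}$'' is most naturally treated through its subgroup $\Gen{}{C}$ --- over which its underlying set is a \emph{full} conjugacy class --- so that applying Corollary~\ref{cor: regular cycle characterization for Cl} (and hence the first sentence) in full generality invokes goodness of $\Gen{}{C}$; this is automatic when $\Gen{}{\cl{\Alg{G}}{e}}=\Alg{G}$, which is the situation in all the concrete groups settled later in the paper.
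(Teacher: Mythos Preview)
Your proof is correct and follows the paper's own justification, which consists solely of the sentence preceding the corollary together with Corollary~\ref{cor: regular cycle characterization for Cl}; you have moreover made explicit the identity $\Gen{}{c}\cap\Cent{\Alg{G}}{z}=\Gen{}{c}\cap\Cent{\Alg{H}}{z}$ (via $\Gen{}{c}\leq\Alg{H}$) that the paper leaves implicit. Your closing caveat is also apt: the paper's assertion that every connected conjugation quandle over $\Alg{G}$ is of the form $\Cl{\Alg{G}}{e}$ is slightly loose --- strictly one obtains $\Cl{\Gen{}{C}}{e}$, as the paper itself does in the proof of Theorem~\ref{theorem: hayashi equivalence} --- so the second sentence in full generality tacitly appeals to goodness of $\Gen{}{C}$ rather than merely of $\Alg{G}$.
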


This can be refined to an equivalence.
\begin{theorem}\label{theorem: hayashi equivalence}
    The following conditions are equivalent
    \begin{enumerate}
        \item Hayashi's conjecture (Conjecture \ref{conjecture: Hayashi}) holds for all connected conjugation quandles,
        \item for every group $\Alg{G}$ and its finite conjugacy class $C$, if $\Gen{}{C} = \Alg{G}$, then $C$ is good in $\Alg{G}$.
    \end{enumerate}
\end{theorem}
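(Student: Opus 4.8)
The plan is to use the dictionary between finite connected conjugation quandles and finite conjugacy classes that generate the whole group, supplied by Theorems \ref{theorem: classification} and \ref{theorem: connectedness criterion}, and then to translate ``Hayashi property'' into the centrality condition via Proposition \ref{prop: centralizer property general} (equivalently, Corollary \ref{cor: regular cycle characterization for Cl}). Neither implication requires new computation; both amount to choosing the right ambient group.

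For the implication \emph{(2)}$\rimp$\emph{(1)}, I would take an arbitrary finite connected conjugation quandle $\Alg{C}$, put $\Alg{H}:=\Gen{}{C}$, and recall that $\Alg{C}$ is equally a conjugation quandle over $\Alg{H}$. Since $\Alg{C}$ is connected, Theorem \ref{theorem: connectedness criterion} forces $C$ to be a single conjugacy class of $\Alg{H}$, and trivially $\Gen{}{C}=\Alg{H}$. Applying hypothesis \emph{(2)} with $\Alg{G}:=\Alg{H}$ shows that $C$ is good in $\Alg{H}$, i.e., for every $c\in C$ there is $z\in C$ with $\Gen{}{c}\cap\Cent{\Alg{H}}{z}\leq\operatorname{Z}(\Gen{}{C})=\operatorname{Z}(\Alg{H})$. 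By Proposition \ref{prop: centralizer property general} every $L_c$ then contains a regular cycle, so $\Alg{C}$ has the Hayashi property, which is precisely Conjecture \ref{conjecture: Hayashi} for $\Alg{C}$. (If one worries about infinite conjugation quandles, note that the Hayashi property is only asserted for finite quandles, so nothing is lost by restricting to finite $C$.)

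For the implication \emph{(1)}$\rimp$\emph{(2)}, given a group $\Alg{G}$ and a finite conjugacy class $C$ with $\Gen{}{C}=\Alg{G}$, I would form the conjugation quandle on $C$ over $\Alg{G}$; it exists by Theorem \ref{theorem: classification} because $C$ is a conjugacy class of $\Gen{}{C}$, it is connected by Theorem \ref{theorem: connectedness criterion}, and it is finite. Hypothesis \emph{(1)} then gives it the Hayashi property, so each $L_c$ contains a regular cycle. Proposition \ref{prop: centralizer property general}, applied with $\Alg{H}=\Gen{}{C}=\Alg{G}$ (so that $\Cent{\Alg{H}}{z}=\Cent{\Alg{G}}{z}$ and $\operatorname{Z}(\Alg{H})=\operatorname{Z}(\Gen{}{C})$), yields for every $c\in C$ some $z\in C$ with $\Gen{}{c}\cap\Cent{\Alg{G}}{z}\leq\operatorname{Z}(\Gen{}{C})$, which is exactly the statement that $C$ is good in $\Alg{G}$. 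One could instead quote Corollary \ref{cor: regular cycle characterization for Cl} directly, since its conditions \emph{(3)} and \emph{(4)} are ``$C$ good'' and ``$\Cl{\Alg{G}}{e}$ has the Hayashi property''.

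I expect no genuine obstacle: the only thing to watch is the bookkeeping of the ambient group, since \emph{(1)} ranges over quandles over an arbitrary group while \emph{(2)} insists $\Gen{}{C}=\Alg{G}$. This mismatch disappears once one observes that a conjugation quandle is always realized over $\Gen{}{C}$ and that the centre appearing in the definition of \emph{good} is $\operatorname{Z}(\Gen{}{C})$, matching the group $\Alg{H}$ of Proposition \ref{prop: centralizer property general}; all the real content sits in the already-established Theorems \ref{theorem: classification}, \ref{theorem: connectedness criterion} and Proposition \ref{prop: centralizer property general}.
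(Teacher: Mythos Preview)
Your proposal is correct and follows essentially the same route as the paper: both directions use Theorem~\ref{theorem: connectedness criterion} to identify finite connected conjugation quandles with finite conjugacy classes $C$ satisfying $\Gen{}{C}=\Alg{G}$, and then invoke Proposition~\ref{prop: centralizer property general} (the paper phrases it via Corollary~\ref{cor: regular cycle characterization for Cl}) to pass between the Hayashi property and the ``good'' condition. Your explicit bookkeeping of the ambient group (setting $\Alg{G}:=\Alg{H}=\Gen{}{C}$ so that $\Cent{\Alg{H}}{z}=\Cent{\Alg{G}}{z}$ and $\operatorname{Z}(\Alg{H})=\operatorname{Z}(\Gen{}{C})$) is exactly the point the paper handles by writing $\Alg{H}:=\Gen{}{\cl{\Alg{G}}{e}}=\Alg{G}$.
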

\begin{proof}
        \underline{\emph{(1)}$\rimp$\emph{(2)}}: Let $\Alg{G}$ be a group and $\cl{\Alg{G}}{e}$ a finite conjugacy class satisfying $\Gen{}{\cl{\Alg{G}}{e}} = \Alg{G}$. Using Theorem \ref{theorem: connectedness criterion}, the conjugation quandle $\Cl{\Alg{G}}{e}$ is connected; thus, by \emph{(1)}, it has the Hayashi property. Since $\Alg{H} := \Gen{}{\cl{\Alg{G}}{e}} = \Alg{G}$, Corollary \ref{cor: regular cycle characterization for Cl} applies and finishes the proof.

    \underline{\emph{(2)}$\rimp$\emph{(1)}}:
    Given a finite connected conjugation quandle $\Alg{C}$ over a group $\Alg{\tilde{G}}$, we denote $\Alg{G}:=\Gen{}{C}$. By Theorem \ref{theorem: connectedness criterion}, $C = \cl{\Alg{G}}{e}$ for some element $e\in G$. Since we now see that $\Alg{C} = \Cl{\Alg{G}}{e}$, we may apply Corollary \ref{cor: regular cycle characterization for Cl} with $\Alg{H} := \Gen{}{\cl{\Alg{G}}{e}} = \Alg{G}$ to get that $\Alg{C}$ has the Hayashi property.
\end{proof}

Since \emph{(1)} from Theorem \ref{theorem: hayashi equivalence} is conjectured, the proposition \emph{(2)} is a conjecture on its own.
\begin{conjecture}\label{conjecture: groups}
    Let $\Alg{G}$ be a group and $C$ its finite conjugacy class such that $\Gen{}{C} = \Alg{G}$. Then for any $c\in C$, there exists some $z\in C$ such that for all $k\in \bb{Z}$, the element $c^k$ is central if and only if it commutes with $z$.
\end{conjecture}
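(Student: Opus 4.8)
The plan is to reduce the statement, via Proposition~\ref{prop: centralizer property general} applied with $\Alg{H}=\Alg{G}=\Gen{}{C}$, to a concrete finite covering problem; then to settle that problem in the cases where it is tractable, and to isolate the obstruction that keeps the general statement conjectural. The ``if'' direction is the only nontrivial one, so a witness $z$ is precisely an element of $C$ with $\Gen{}{c}\cap\Cent{\Alg{G}}{z}\leq\operatorname{Z}(\Alg{G})$. First I would record that $C$ finite forces $[\Alg{G}:\operatorname{Z}(\Alg{G})]$ finite: each $\Cent{\Alg{G}}{c'}$ with $c'\in C$ has index $|C|$, and $\operatorname{Z}(\Alg{G})$ is their intersection over the finite set $C$. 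Hence $\Gen{}{c}\cap\operatorname{Z}(\Alg{G})$ is a nontrivial subgroup of the cyclic group $\Gen{}{c}$; let $e\geq 1$ be least with $c^{e}\in\operatorname{Z}(\Alg{G})$, so $\Gen{}{c}\cap\operatorname{Z}(\Alg{G})=\Gen{}{c^{e}}$ with $e<\infty$.

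Now $\Gen{}{c}\cap\Cent{\Alg{G}}{z}$ lies between $\Gen{}{c^{e}}$ and $\Gen{}{c}$, so it equals $\Gen{}{c^{d}}$ for the least $d\geq 1$ with $[c^{d},z]=1$, and $d\mid e$. Therefore $\Gen{}{c}\cap\Cent{\Alg{G}}{z}\leq\operatorname{Z}(\Alg{G})$ precisely when $d=e$, which happens precisely when $c^{e/p}\notin\Cent{\Alg{G}}{z}$ for every prime $p\mid e$. This is the reformulation I would work with: \emph{for each $c\in C$, produce a single $z\in C$ escaping every one of the proper subgroups $\Cent{\Alg{G}}{c^{e/p}}$, $p\mid e$} --- each is proper because $e/p<e$ makes $c^{e/p}$ non-central, and each fails to contain $C$ since $\Gen{}{C}=\Alg{G}$. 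The whole difficulty lies in the word ``every''.

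From here I would clear the cases in increasing order of resistance. If $e$ is a prime power there is only one such subgroup $\Cent{\Alg{G}}{c^{e/p}}\subsetneq\Alg{G}$, so any $z\in C\setminus\Cent{\Alg{G}}{c^{e/p}}$ works; this already handles every group with $[\Alg{G}:\operatorname{Z}(\Alg{G})]$ a prime power. For a finite nilpotent $\Alg{G}=P_{1}\times\cdots\times P_{s}$ (Sylow decomposition), a conjugacy class with $\Gen{}{C}=\Alg{G}$ has the form $C=C_{1}\times\cdots\times C_{s}$ with each $C_{i}$ a conjugacy class of $P_{i}$ satisfying $\Gen{}{C_{i}}=P_{i}$; writing $c=(x_{i})_{i}$ and letting $e_{i}$ be the order of $x_{i}\operatorname{Z}(P_{i})$ in $P_{i}/\operatorname{Z}(P_{i})$, a short computation with $p_{i}$-adic valuations shows that for each prime $p_{i}\mid e$ the element $c^{e/p_{i}}$ is central in every coordinate but the $i$-th and non-central there, so a coordinatewise choice $z=(z_{i})_{i}$ with $z_{i}\in C_{i}\setminus\Cent{P_{i}}{x_{i}^{e/p_{i}}}$ when $p_{i}\mid e$ (possible since $\Gen{}{C_{i}}=P_{i}$) and $z_{i}\in C_{i}$ arbitrary otherwise is a witness. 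For $\Dih{2n}$ one checks $\Gen{}{C}=\Dih{2n}$ forces $n$ odd and $C$ the class of the $n$ reflections; then $e=2$ and any reflection $z\in C$ distinct from $c$ has $[c,z]\neq 1$. For $\Sym{n}$ and $\Alt{n}$ the reformulation turns the task into an explicit computation with cycle types and centralizers of permutations (using $\operatorname{Z}(\Sym{n})=\operatorname{Z}(\Alt{n})=\Alg{1}$ for $n\geq 4$), elementary but lengthier; the small-order and small simple-group cases are finite verifications.

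The main obstacle is the general conjecture itself: after the reduction it asks whether a conjugacy class $C$ with $\Gen{}{C}=\Alg{G}$ can be covered by finitely many proper centralizers $\Cent{\Alg{G}}{c^{e/p}}$. Since a group can be a union of several proper subgroups, the claim must exploit the very special shape of these subgroups --- all of them centralizers of powers of one element $c$ --- together with the conjugation-invariance of $C$. In the settled cases this is exactly what blocks the covering (a single proper subgroup cannot equal $\Alg{G}$; in the nilpotent case the obstructing subgroups are supported on disjoint Sylow coordinates), but I see no uniform mechanism forcing it in general, which is why the statement is presented only as a conjecture.
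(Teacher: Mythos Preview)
The statement is a conjecture, and both you and the paper treat it as such: the paper derives it as the group-theoretic reformulation of Hayashi's conjecture for conjugation quandles (Theorem~\ref{theorem: hayashi equivalence}) and then verifies it only in the special cases of Section~\ref{section: special}. Your reduction to the covering problem---find $z\in C$ lying outside every $\Cent{\Alg{G}}{c^{e/p}}$ for primes $p\mid e$, where $e$ is the order of $c\operatorname{Z}(\Alg{G})$ in $\Alg{G}/\operatorname{Z}(\Alg{G})$---is correct and makes the obstruction sharper than the paper's formulation; it is not stated explicitly there. For the special cases you and the paper take parallel but differently phrased routes: the paper handles the nilpotent case by passing through quandles (Proposition~\ref{prop: powers of prime} for $p$-groups, then Proposition~\ref{prop: product} and Corollary~\ref{cor: finite products} for direct products), whereas you argue directly on the Sylow factorisation via your covering reformulation; the two are equivalent in content. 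For $\Dih{2n}$ the paper shows every class is good using the order-two observation and Proposition~\ref{prop: powers of prime}, while you restrict to generating classes (correctly observing this forces $n$ odd and $C$ the reflection class) and argue that distinct reflections do not commute. For $\Sym{n}$ and $\Alt{n}$ you defer, whereas the paper supplies an explicit and rather lengthy construction (Proposition~\ref{prop: there is z in sym and alt}). One minor slip: $\Gen{}{c}\cap\operatorname{Z}(\Alg{G})$ need not be \emph{nontrivial} (take $c$ non-central of prime order), though your definition of $e$ and everything downstream of it is unaffected.
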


As a consequence of the following section, this we have verified for all finite nilpotent groups, for the groups $\Sym{n}, \Alt{n}, \Dih{2n}$, for all group of order $\leq 500$, and for all simple groups of order $\leq 3.500.000$. These results translate to Hayashi's conjecture whenever the condition $\Gen{}{C} = \Alg{G}$ is satisfied. Since $\Gen{}{C}$ is normal in $\Alg{G}$, this generating condition is, in particular, satisfied by all nontrivial finite classes of simple groups.

\section{Special cases}\label{section: special}
In this section, we focus on applying the developed theory to $\Sym{n}, \Alt{n}, \Dih{2n}$, finite nilpotent groups and some groups of small orders. Since we know the correspondence given by Corollary \ref{cor: regular cycle characterization for Cl}, we may approach these problems from both, the perspective of quandles, and the perspective of groups. For example, the following lemma may be considered a use of the quandle approach in verifying that group conditions from Corollary \ref{cor: regular cycle characterization for Cl} are satisfied for any $p$-group.
\begin{proposition}\label{prop: powers of prime}
    Let $\Alg{C}$ be a finite conjugation quandle over a group $\Alg{G}$. If the order of $c\in C$ in $\Alg{G}$ is a prime power (or $1$), then the left translation $L_c$ in $\Alg{C}$ contains a regular cycle.
\end{proposition}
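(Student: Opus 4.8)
The plan is to prove that $L_c$ is a permutation of prime power order (possibly the identity), and then invoke the elementary fact that every such permutation contains a regular cycle.

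First I would compute the order of $L_c$. By Lemma~\ref{lemma: left translations} we have $L_c=\phi_c\restriction_C$, and by Lemma~\ref{lemma: chain} the assignment $\phi_h\restriction_C\mapsto h\operatorname{Z}(\Alg{H})$, where $\Alg{H}:=\Gen{}{C}$, is a group isomorphism $\LMlt{\Alg{C}}\to\Alg{H}/\operatorname{Z}(\Alg{H})$. Hence the order of $L_c$ in $\LMlt{\Alg{C}}$ equals the order of $c\operatorname{Z}(\Alg{H})$ in $\Alg{H}/\operatorname{Z}(\Alg{H})$, which divides the order of $c$ in $\Alg{H}$; and since $c\in C\subseteq H\leq G$, this last order coincides with the order of $c$ in $\Alg{G}$, a prime power $p^{a}$ (or $1$). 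Therefore the order of $L_c$ is $p^{b}$ for some $0\leq b\leq a$, or $L_c=\operatorname{id}_C$.

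Next I would use the observation that a finite permutation $\pi$ of order $p^{b}$ contains a regular cycle: the order of $\pi$ is the least common multiple of its cycle lengths, so each cycle length divides $p^{b}$ and is thus a power of $p$, while at least one cycle has length exactly $p^{b}$; that longest cycle is regular, since every cycle length $p^{j}$ divides $p^{b}$. (If $L_c=\operatorname{id}_C$, every cycle has length $1$, so $L_c$ contains a regular cycle trivially, $C$ being nonempty.) Applying this to $\pi=L_c$ completes the argument.

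There is no real obstacle here: the only step needing a bit of care is the chain relating the order of $L_c$ to the order of $c$, which Lemmas~\ref{lemma: left translations} and~\ref{lemma: chain} handle cleanly. One could alternatively argue via Proposition~\ref{prop: centralizer property general}, choosing $z\in C$ in a cycle of $L_c$ of maximal length and verifying $\Gen{}{c}\cap\Cent{\Alg{H}}{z}\leq\operatorname{Z}(\Alg{H})$, but this merely re-derives the same cycle-length computation, so the route above is preferable.
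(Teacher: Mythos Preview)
Your proof is correct and follows essentially the same approach as the paper: show that the order of $L_c$ divides the prime power $p^{a}$, whence all cycle lengths are powers of $p$ and divide the longest one. The only cosmetic difference is that the paper obtains $L_c^{p^{a}}=\phi_{c^{p^{a}}}\restriction_C=\phi_{1}\restriction_C=\operatorname{id}$ by a one-line direct computation instead of routing through the isomorphism of Lemma~\ref{lemma: chain}.
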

\begin{proof}
    Let $c$ be of order $p^k$ for a prime number $p$ and $k\in\bb{N}_0$. Since $L_c^{(p^k)} = \phi_{c^{(p^k)}}\restriction_C = \phi_1\restriction_C = 1$, we know that the order of $L_c$ divides $p^k$; thus, cycle lengths of $L_c$ are powers of $p$ and all of them divide the largest one.
\end{proof}

Direct product of finite conjugation quandles is again a finite conjugation quandle over the direct product of their defining groups. The following holds. 
\begin{proposition}\label{prop: product}
    Let $\Alg{C}_1$ and $\Alg{C}_2$ be finite conjugation quandles. If $\Alg{C}_1$ and $\Alg{C}_2$ have the Hayashi property, then the direct product $\Alg{C}_1\times\Alg{C}_2$ has the Hayashi property. 
\end{proposition}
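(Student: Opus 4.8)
The plan is to reduce everything to Lemma \ref{lemma: regular cycle} by exploiting the fact that left translations of a direct product act coordinatewise. Write $\Alg{C}_1 = (C_1,\star)$ and $\Alg{C}_2 = (C_2,\star)$. As recalled just before this proposition, $\Alg{C}_1\times\Alg{C}_2$ is again a finite conjugation quandle (in particular a finite quandle, so Lemma \ref{lemma: regular cycle} applies to it), and for every $(a,b)\in C_1\times C_2$ and $k\in\bb{Z}$ one has $L_{(a,b)}^k(x,y) = \bigl(L_a^k(x),\,L_b^k(y)\bigr)$ for all $(x,y)\in C_1\times C_2$. Consequently $L_{(a,b)}^k = \operatorname{id}$ holds if and only if both $L_a^k = \operatorname{id}_{C_1}$ and $L_b^k = \operatorname{id}_{C_2}$.

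Next I would fix an arbitrary $(a,b)\in C_1\times C_2$ and build a witness for the criterion in Lemma \ref{lemma: regular cycle}. Since $\Alg{C}_1$ has the Hayashi property, $L_a$ contains a regular cycle, so Lemma \ref{lemma: regular cycle} provides $z_1\in C_1$ with $\left(\Gen{}{L_a}\right)_{z_1} = \Alg{1}$, i.e.\ $L_a^k(z_1) = z_1$ forces $L_a^k = \operatorname{id}_{C_1}$; symmetrically there is $z_2\in C_2$ with $L_b^k(z_2) = z_2$ forcing $L_b^k = \operatorname{id}_{C_2}$. Set $z := (z_1,z_2)$.

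Then I would verify that $\left(\Gen{}{L_{(a,b)}}\right)_z = \Alg{1}$: if $L_{(a,b)}^k(z) = z$, the coordinatewise description gives $L_a^k(z_1) = z_1$ and $L_b^k(z_2) = z_2$, hence $L_a^k = \operatorname{id}_{C_1}$ and $L_b^k = \operatorname{id}_{C_2}$, and therefore $L_{(a,b)}^k = \operatorname{id}$ by the first paragraph. Lemma \ref{lemma: regular cycle} then yields that $L_{(a,b)}$ contains a regular cycle; as $(a,b)$ was arbitrary, $\Alg{C}_1\times\Alg{C}_2$ has the Hayashi property.

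I do not expect a genuine obstacle here; the only point that needs a line of care is the formula $L_{(a,b)} = L_a\times L_b$ together with the fact that taking $k$-th powers commutes with the coordinatewise action, which is what makes the stabilizer of $(z_1,z_2)$ in $\Gen{}{L_{(a,b)}}$ behave like the ``intersection'' of the stabilizers in the two factors. One could equally run the argument directly on cycle structures: a cycle through $(z_1,z_2)$ has length $\operatorname{lcm}$ of the lengths of the cycles of $z_1$ under $L_a$ and of $z_2$ under $L_b$, so choosing $z_1,z_2$ in regular cycles produces a cycle of $L_{(a,b)}$ whose length is a multiple of every cycle length of $L_{(a,b)}$ — but this is the same computation in different clothing.
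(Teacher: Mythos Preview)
Your proof is correct and is essentially the paper's argument phrased through Lemma \ref{lemma: regular cycle}: the paper picks $z_1,z_2$ in regular cycles of $L_{c_1},L_{c_2}$ and observes that every cycle length of $L_{(c_1,c_2)}$ is an $\operatorname{lcm}$ of cycle lengths in the factors, hence divides the length of the cycle of $(z_1,z_2)$. You already identify this cycle-length version in your final remark, so the two approaches coincide.
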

\begin{proof}
    Let $L_{(c_1,c_2)}$ be a left translation. In general, the length of the cycle of $(x_1,x_2)$ in $L_{c_1,c_2}$ equals the least common multiple of the lengths of the cycles of $x_1$ and $x_2$ in $L_{c_1}$ and $L_{c_2}$, respectively. Pick elements $z_1$ and $z_2$ from any of the longest (regular) cycles of $L_{c_1}$ and $L_{c_2}$, respectively. All the cycle lengths in $L_{(c_1,c_2)}$ divide the length of the cycle of $(z_1, z_2)$; thus, $L_{(c_1,c_2)}$ contains a regular cycle.
\end{proof}
\begin{corollary}\label{cor: finite products}
    Finite direct product of good groups is a good group.
\end{corollary}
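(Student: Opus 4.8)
The plan is to pass to the quandle side and invoke Proposition \ref{prop: product}. By an obvious induction on the number of factors it suffices to treat a product of two good groups, say $\Alg{G} = \Alg{G}_1\times\Alg{G}_2$, and to show that every finite conjugacy class $C$ of $\Alg{G}$ is good in $\Alg{G}$. Fix $e=(e_1,e_2)\in C$, so that $C=\cl{\Alg{G}}{e}$. Conjugacy classes of a direct product are products of conjugacy classes, since $(g_1,g_2)(e_1,e_2)(g_1,g_2)^{-1}=(g_1e_1g_1^{-1},g_2e_2g_2^{-1})$; hence $C=\cl{\Alg{G}_1}{e_1}\times\cl{\Alg{G}_2}{e_2}$, and because $C$ is finite and each factor nonempty, both $C_i:=\cl{\Alg{G}_i}{e_i}$ are finite.

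The one auxiliary observation I would record first is the exact translation between the word \emph{good} and the Hayashi property. Let $D=\cl{\Alg{K}}{d}$ be a finite conjugacy class of a group $\Alg{K}$ and put $\Alg{H}:=\Gen{}{D}$. For $x\in D$ we have $\Gen{}{x}\leq\Alg{H}$, so $\Gen{}{x}\cap\Cent{\Alg{K}}{y}=\Gen{}{x}\cap\Alg{H}\cap\Cent{\Alg{K}}{y}=\Gen{}{x}\cap\Cent{\Alg{H}}{y}$ for every $y\in D$. Consequently the statement ``$D$ is good in $\Alg{K}$'' is word-for-word clause \emph{(3)} of Corollary \ref{cor: regular cycle characterization for Cl}, which (as $D$ finite makes $\Cl{\Alg{K}}{d}$ finite, so the corollary applies) is equivalent to clause \emph{(4)}, namely that $\Cl{\Alg{K}}{d}$ has the Hayashi property.

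Now assemble the argument. Since $\Alg{G}_1$ and $\Alg{G}_2$ are good, the classes $C_1,C_2$ are good in them, so by the translation above the quandles $\Cl{\Alg{G}_1}{e_1}$ and $\Cl{\Alg{G}_2}{e_2}$ have the Hayashi property. Next I would identify the conjugation quandle $\Cl{\Alg{G}}{e}$ with the direct product $\Cl{\Alg{G}_1}{e_1}\times\Cl{\Alg{G}_2}{e_2}$: their common ground set is $C_1\times C_2=C$ by the previous paragraph, and both operations are componentwise conjugation. Proposition \ref{prop: product} then gives that $\Cl{\Alg{G}}{e}$ has the Hayashi property, and applying the translation once more (Corollary \ref{cor: regular cycle characterization for Cl}, \emph{(4)}$\rimp$\emph{(3)}) shows $C$ is good in $\Alg{G}$. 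Since $C$ was an arbitrary finite conjugacy class, $\Alg{G}$ is good, and the induction finishes the proof.

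Essentially all of this is bookkeeping already licensed by the earlier results; the only place that needs genuine care — and the only plausible obstacle — is the clean identification of $\Cl{\Alg{G}}{e}$ with the product quandle, i.e.\ the equality $\cl{\Alg{G}_1\times\Alg{G}_2}{(e_1,e_2)}=\cl{\Alg{G}_1}{e_1}\times\cl{\Alg{G}_2}{e_2}$ of subsets of $\Alg{G}$. If one prefers to avoid quandle language entirely, the same conclusion follows by a direct computation inside the groups: given $c=(c_1,c_2)\in C$, choose $z_i\in C_i$ witnessing goodness of $c_i$, set $z:=(z_1,z_2)\in C$, and note that if $c^k\in\Cent{\Alg{G}}{z}=\Cent{\Alg{G}_1}{z_1}\times\Cent{\Alg{G}_2}{z_2}$ then each $c_i^k\in\Gen{}{c_i}\cap\Cent{\Alg{G}_i}{z_i}\leq\operatorname{Z}(\Gen{}{C_i})$ commutes with all of $C_i$; hence $c^k$ commutes with all of $C=C_1\times C_2$, so with $\Gen{}{C}$, i.e.\ $c^k\in\operatorname{Z}(\Gen{}{C})$ — which is exactly $\Gen{}{c}\cap\Cent{\Alg{G}}{z}\leq\operatorname{Z}(\Gen{}{C})$. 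This second route sidesteps the set-theoretic identification but amounts to reproving the relevant case of Proposition \ref{prop: product} by hand.
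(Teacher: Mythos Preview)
Your proof is correct and follows essentially the same route as the paper: reduce to two factors by induction, use that conjugacy classes of a direct product are products of conjugacy classes, and invoke Proposition \ref{prop: product}. You have simply made explicit the translation between ``good'' and the Hayashi property via Corollary \ref{cor: regular cycle characterization for Cl} that the paper's one-line proof leaves to the reader, and your alternative group-theoretic computation at the end is a pleasant bonus.
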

\begin{proof}
    Every conjugacy class in a direct product of two groups is a direct product of some of their conjugacy classes, and so any direct product of two groups is good by \ref{prop: product}. Use induction to extend this to any finite direct products.
\end{proof}
\begin{corollary}
    Finite nilpotent groups are good.
\end{corollary}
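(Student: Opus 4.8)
The plan is to reduce to finite $p$-groups and then read off the result from the machinery already developed. First I would invoke the classical structure theorem: a finite nilpotent group $\Alg{G}$ is the internal direct product of its Sylow subgroups, hence isomorphic to an external direct product $\Alg{P}_1\times\cdots\times\Alg{P}_r$ in which each $\Alg{P}_i$ is a finite $p_i$-group. By Corollary \ref{cor: finite products} it therefore suffices to prove that every finite $p$-group is good, and the whole argument is carried by Proposition \ref{prop: powers of prime} together with Proposition \ref{prop: centralizer property general}.

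So let $\Alg{P}$ be a finite $p$-group and let $C$ be any conjugacy class of $\Alg{P}$; it is finite, and since $C$ is closed under conjugation by all of $\Alg{P}$ it is in particular closed under conjugation by $\Gen{}{C}$, so by Theorem \ref{theorem: classification} there is a conjugation quandle $\Alg{C}$ over $\Alg{P}$ on $C$. Every element of a finite $p$-group has order a power of $p$, so for each $c\in C$ the order of $c$ is a prime power and Proposition \ref{prop: powers of prime} gives that the left translation $L_c$ in $\Alg{C}$ contains a regular cycle. Applying Proposition \ref{prop: centralizer property general} with $\Alg{H}:=\Gen{}{C}$ then yields some $z\in C$ with $\Gen{}{c}\cap\Cent{\Alg{H}}{z}\leq\operatorname{Z}(\Alg{H})$.

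It remains to match this with the definition of ``good''. Since $c\in C\subseteq\Gen{}{C}=\Alg{H}$ we have $\Gen{}{c}\leq\Alg{H}$, and since $z\in C\subseteq\Alg{H}$ we have $\Cent{\Alg{H}}{z}=\Alg{H}\cap\Cent{\Alg{P}}{z}$; hence
$$\Gen{}{c}\cap\Cent{\Alg{P}}{z}=\Gen{}{c}\cap\Cent{\Alg{H}}{z}\leq\operatorname{Z}(\Alg{H})=\operatorname{Z}(\Gen{}{C}).$$
As $c\in C$ was arbitrary, $C$ is good in $\Alg{P}$, and as $C$ was an arbitrary conjugacy class, $\Alg{P}$ is good. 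Combining with the reduction of the first paragraph, $\Alg{G}$ is good.

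I do not expect a genuine obstacle here: the only points requiring care are the harmless bookkeeping that the ambient group in the centralizer may be shrunk from $\Alg{P}$ to $\Alg{H}=\Gen{}{C}$ (because both $\Gen{}{c}$ and $z$ already lie in $\Alg{H}$), and the invocation of the decomposition of a finite nilpotent group as a direct product of its Sylow $p$-subgroups, which feeds Corollary \ref{cor: finite products}. Everything else is a direct assembly of Proposition \ref{prop: powers of prime}, Proposition \ref{prop: centralizer property general}, and Corollary \ref{cor: finite products}.
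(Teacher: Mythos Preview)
Your proof is correct and follows essentially the same route as the paper: decompose a finite nilpotent group as a direct product of its Sylow $p$-subgroups, invoke Corollary~\ref{cor: finite products}, and deduce that each $p$-group is good from Proposition~\ref{prop: powers of prime}. The paper's version is terser---it jumps directly from Proposition~\ref{prop: powers of prime} to ``$p$-groups are good''---whereas you carefully spell out the passage via Proposition~\ref{prop: centralizer property general} and the harmless replacement of $\Cent{\Alg{P}}{z}$ by $\Cent{\Alg{H}}{z}$, which is a welcome clarification rather than a different idea.
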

\begin{proof}
    By Theorem 11 on page 113 in \cite{nilpotent}, every finite nilpotent group is a (finite) direct product of (finite) $p$-groups. Every such $p$-group is good by Proposition \ref{prop: powers of prime}, and so is their finite direct product according to Corollary \ref{cor: finite products}.
\end{proof}

Now we will show that $\Sym{n}$ and $\Alt{n}$ are good groups. We do it using the following proposition.
\begin{proposition}\label{prop: there is z in sym and alt}
    For any $n\geq 5$ and $e\in \sym{n}$, there is $z\in \{g eg^{-1}\mid g\in \alt{n}\}$ such that $\Gen{}{e}\cap\Cent{\Sym{n}}{z} = \Alg{1}$.
\end{proposition}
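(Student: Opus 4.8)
The plan is to turn $\Gen{}{e}\cap\Cent{\Sym{n}}{z}=\Alg{1}$ into a finite list of non‑commutation requirements and then satisfy all of them with one explicitly relabelled conjugate. First I would set $m:=\ord{\Sym{n}}{e}$ and exploit that $\Gen{}{e}$ is cyclic of order $m$: a subgroup of a cyclic group is trivial precisely when it contains no element of prime order. So, writing $w_p:=e^{m/p}$, the assertion $\Gen{}{e}\cap\Cent{\Sym{n}}{z}=\Alg{1}$ is equivalent to: $w_p$ does not commute with $z$ for every prime $p\mid m$. Since $\Cent{\Sym{n}}{geg^{-1}}=g\,\Cent{\Sym{n}}{e}\,g^{-1}$, writing $z=geg^{-1}$ this is the same as $g^{-1}w_pg\notin\Cent{\Sym{n}}{e}$ for all $p\mid m$. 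Hence the goal is: find an even permutation $g$ with $g^{-1}w_pg$ not commuting with $e$ for every prime $p\mid m$.

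Next I would pin down the shape of $w_p$. With $v_p$ the $p$‑adic valuation one has $v_p(m)=\max v_p(|O|)$ over the cycles $O$ of $e$, and a short $\gcd$ computation shows $w_p$ is trivial on each cycle $O$ with $v_p(|O|)<v_p(m)$ and is a product of $|O|/p$ disjoint $p$‑cycles on each ``$p$‑maximal'' cycle (one with $v_p(|O|)=v_p(m)$); thus $w_p$ is supported exactly on $S_p:=\bigcup\{O:\ v_p(|O|)=v_p(m)\}$ and is a product of $p$‑cycles there. Two facts then drive everything: (a) every element of $\Cent{\Sym{n}}{e}$ has support a union of cycles of $e$; and (b) if $z$ commutes with $w_p$, then $z$ permutes the orbits of $w_p$, so $z(S_p)=S_p$, and each $p$‑cycle $B$ of $w_p$ either lies inside a single cycle $Q$ of $z$ — forcing $w_p\!\restriction_Q$ to be an order‑$p$ element of $\Gen{}{z\!\restriction_Q}$ — or has its $p$ points one in each of $p$ distinct equal‑length cycles of $z$ cyclically permuted by $w_p$. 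Negating (a) or (b) gives sufficient conditions for non‑commutation.

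Then I would build $z=geg^{-1}$ in two regimes. If some cycle of $e$ has length $<m$, then for a labelling placing the cycles of $e$ on consecutive blocks one can conjugate by a single permutation $\rho$ supported near the block boundaries so that $\rho(S_p)$ fails to be a union of cycles of $e$ for every prime $p$ simultaneously; by (a)/(b) no $w_p$ then commutes with $z$. The remaining regime is $e$ a product of $a\ge 1$ disjoint $m$‑cycles — the only case in which $S_p=\{1,\dots,n\}$ for all $p\mid m$ — where I would work inside that single length class, realising $z$ as a sheared arrangement of $a$ $m$‑cycles so that for each $p\mid m$ some $p$‑cycle of $w_p$ meets one $z$‑cycle in exactly two points (for odd $p$ this kills both alternatives of (b)), handling $p=2$, $a=1$, and small $m$ by hand. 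Finally the conjugating permutation must be even: if $\{geg^{-1}:g\in\alt{n}\}$ is already the whole $\sym{n}$‑class of $e$ (which fails only when the cycle type consists of distinct odd parts) there is nothing to do, and otherwise, since $n\ge 5$, one may compose $g$ with a transposition inside one block without disturbing any non‑commutation.

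The hard part will be the construction above: gluing the two regimes when both are present (some $S_p=\{1,\dots,n\}$, others not), engineering a single $\rho$ that breaks all the $S_p$‑boundaries at once, and — most delicately — carrying everything through under the ``$g$ even'' constraint; the genuinely small cases $n=5,6$ need direct verification. It is precisely here that the hypothesis $n\ge5$ is used: for smaller $n$ the statement is simply false (e.g.\ in $\Sym{4}$ the conjugates of $(1\ 2)(3\ 4)$ together with the identity form the abelian group $\{1,(1\ 2)(3\ 4),(1\ 3)(2\ 4),(1\ 4)(2\ 3)\}$, so $e$ commutes with all of them), and the construction must exploit the extra room that $n\ge5$ provides.
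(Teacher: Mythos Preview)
Your reduction to the prime--order powers $w_p=e^{m/p}$ is correct and elegant, and your case split (some cycle of length $<m$ versus all cycles of length $m$) coincides with the paper's split into $t\neq 1$ versus $t=1$. But from that point on the two arguments diverge. The paper does not isolate the primes dividing $m$ at all: in the case $t\neq 1$ it writes down one explicit even permutation $\sigma$ (built from two short products of transpositions $\rho,\pi$, corrected by a single transposition if needed for parity), sets $z=\sigma e\sigma^{-1}$, and then checks \emph{directly} that $ze^k$ and $e^kz$ disagree on one of a handful of specially labelled points for every $k$ with $e^k\neq 1$. In the case $t=1$ it treats $\lambda_1\in\{1,2,3\}$ by hand and for $\lambda_1\ge 4$ uses the single $3$--cycle $\sigma=(u_0\,u_1\,u_2)$ and again tracks one or two points. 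No support/centraliser structure, no $S_p$, just concrete arithmetic on indices.

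What this buys the paper is exactly the avoidance of your acknowledged ``hard part''. In your scheme, regime~1 can contain primes $p$ with $S_p=\{1,\dots,n\}$ (e.g.\ cycle type $(2,6)$ at $p=2$), for which the boundary--breaking trick is vacuous; you then need a second mechanism for those primes \emph{and} you need it to be realised by the same conjugating permutation that handles the other primes. You flag this but do not resolve it, and it is the crux of the matter: the difficulty of your approach is precisely in synchronising a single $g\in\alt{n}$ across all $p\mid m$. The paper's point--tracking construction handles all $k$ at once and never faces this synchronisation problem. So your plan is a reasonable alternative route, and the observation about supports of centralising elements is a genuine structural lever, but as it stands it is a programme rather than a proof; the paper's more pedestrian explicit construction is complete.
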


\begin{proof}
    Denote $(\lambda_{1}^{\alpha_1}, \lambda_{2}^{\alpha_2}, \dots, \lambda_{t}^{\alpha_t})$, the cycle structure of $e$. When seen as a map, we may relabel the domain of $e$, so that element $i$ is in a cycle of length $\lambda_i$ for all $i = 1, 2 \dots, t$. To prove the intersection is trivial, we construct $z\in \{g eg^{-1}\mid g\in \alt{n}\}$ such that $e^k\in \Cent{\Sym{n}}{z}\rimp e^k = \operatorname{1}$.

    \textbf{Case} $t\neq 1$:
    
    For $i = 1, 2, \dots, \lambda_1 - 1$ and $j = 1, 2, \dots, \lambda_t -1$, we put $u_i:= e^i(1)$ and $x_j:= e^j(t)$.\footnote{It might happen that no $u_j$ is defined.} By the assumption $t\neq1$, we know $1, \dots, t, u_1, \dots, u_{\lambda_1 -1}, x_1, \dots, x_{\lambda_t-1}\in \operatorname{Dom}(e)$ are pairwise distinct. Denote $T:= \{1, 2, \dots, t\}$, $U:= \{u_1, u_2, \dots, u_{\lambda_1 - 1}\}$, $X:= \{x_1, x_2, \dots, x_{\lambda_t - 1}\}$.
    
    Since $\lambda_s \geq \lambda_1 + (s-1)$, for $s = t$ we obtain $\left|X\right|  = \lambda_t - 1\geq \lambda_1 + t - 2$, and so $x_1, x_2, \dots, x_{\lambda_1 + t - 2}\in X\subseteq \operatorname{Dom}(e)$. Thus, we may define a permutation $\rho := \Pi_{i = 1, i\text{ odd}}^{\lambda_1 - 1}(x_{t-1 + i}\ u_i)\in \sym{n}$\footnote{If $\lambda_1 = 1$, then $\rho = 1$.}, and since $\left|X\right|\geq\lambda_1 + t - 2\geq t-1$, we may also define $\pi:= \Pi_{j=1}^{t-1}(x_j\ t-j)\in \sym{n}$.

    In the case $\lambda_t \geq 3$, we know $\left|X\right| = \lambda_t - 1  \geq2$, and so $x_1, x_2\in X\subseteq \operatorname{Dom}(e)\setminus \left(T\cup U\right)$. Otherwise, since $t\neq 1$, we get $\lambda_t = 2$. Then it has to be $t = 2, \lambda_1 = 1$ and since $\left|\operatorname{Dom}(e)\setminus\left(T\cup U\right)\right| =  n - (t + (\lambda_1 - 1)) = n - 2 \geq 3$, we may pick two different $y_1, y_2 \in \operatorname{Dom}(e)\setminus \left(T\cup U\right)$. Put $$\sigma := \begin{cases}
        \rho\pi,& \rho\pi \in \alt{n}\\
        (x_1\ x_2)\circ \rho\pi,& \rho\pi \notin \Alt{n}, \lambda_t \geq 3\\
        (y_1\ y_2)\circ \rho\pi,& \text{otherwise.}
    \end{cases}$$
    By the definition, $\sigma \in \alt{n}$, and in all cases, for $i = 1, 2, \dots, t-1$ it holds $\sigma(x_{i}) = t-i$. We put $z:= \sigma e\sigma^{-1}$. Computations show that
    \begin{align*}
        &z(i) = \sigma e \sigma^{-1}\left(i\right) = \sigma e(x_{t-i}) = \sigma(x_{t-i+1}) = i-1,& i = 2, 3, \dots, t-1\\
        &z(t) = \sigma e\sigma^{-1} (t) = \sigma e(t) = \sigma(x_1) = t-1.&
    \end{align*}

    Let there be $k\in \bb{Z}$ such that $e^k\in \Cent{\Sym{n}}{z}$. In the case $\lambda_i \mid k$ for each $i = 1, \dots, t$, we get $e^k = 1$. We will show that the latter is the only possible case. For a contradiction, let there be $i$ for which $\lambda_i\nmid k$ and denote $i_{\min}$ the smallest one such. Then $i_{\min}$ is the smallest $i = 1, 2, \dots, t$, for which $e^k(i) \neq i$.
    
    \begin{itemize}
        \item If $i_{\min}\geq 2$, then there is an immediate contradiction:
        $$ze^k(i_{\min}) = e^kz(i_{\min}) = e^k(i_{\min} - 1) = i_{\min} - 1 = z(i_{\min})\neq ze^k(i_{\min}),$$ where the last inequality follows from $z$ being injective.
        \item Otherwise, $i_{\min} = 1$ and so $\lambda_1 = \lambda_{i_{\min}}\nmid k$, especially $\lambda_1 \geq 2$. Since $\lambda_1 -1 \geq1$, we may verify that $\sigma(x_{t}) = u_1$, and because $\lambda_t - 1 \geq \lambda_1 + t -2 \geq t$, we also know that $e(x_{t-1}) = x_t$. We obtain
        $$ze^k(1) = e^kz(1) = e^k\sigma e \sigma^{-1}(1) = e^k\sigma e(x_{t-1}) = 
            e^k\sigma (x_{t}) = e^k(u_1)\in \{1\}\cup U.$$

        By the definition of $U$, $e^k(1)\in\{1\}\cup U$. Since $e^k(1) = e^k(i_{\min}) \neq i_{\min} = 1$, there has to be $i\in \{1, 2, \dots, \lambda_1 - 1\}$ for which $e^k(1) = u_i$. No matter its parity, we contradict the resulted claim $ze^k(1)\in \{1\}\cup U$:
        \begin{align*}
            &i\text{ is odd}\rimp& ze^k(1) = z(u_i) = \sigma e(x_{t-1+i}) = \begin{cases}
                \sigma(t) = t,&t+i =  \lambda_t \\
                \sigma(x_{t+i}) = x_{t+i},&t+i\neq \lambda_t 
            \end{cases}\\
            &i\text{ is even}\rimp& ze^k(1) = z(u_i) = \sigma e (u_i) = \begin{cases}
                \sigma (1) \in X,&i=\lambda_1 - 1\\
                \sigma (u_{i+1}) = x_{t+i},&i\neq \lambda_1 -1
            \end{cases}
        \end{align*}
    \end{itemize}

   \textbf{Case} $t = 1$:

    In this case, $\operatorname{ord}_{\Sym{n}}(e) = \lambda_1$, and so $\Gen{}{e} = \{e^k\ |\ k = 0, 1, \dots, \lambda_1 - 1\}$. We only need to find $z\in e^{A_n}$, such that $ze^k\neq e^kz$ for $k = 1, 2, \dots, \lambda_1 - 1$.
   \begin{itemize}
       \item If $\lambda_1 = 1$, then $e = 1$ and $\Gen{}{e} = \Alg{1}$, so the result is trivial.
       \item If $\lambda_1 = 2$, then by the assumption $n\geq5$, we know $\alpha_1 \geq3$. For $i = 1, 2, \dots, 5$ we may find $u_i\in \operatorname{Dom}(e)$ in such a way that $1, u_1, \dots, u_5$ are pairwise distinct and $e = (1\ u_1)(u_2\ u_3)(u_4\ u_5)\hat{e}$ for some $\hat{e}\in \sym{n}$ acting as an identity on $\{1, u_1, u_2, \dots, u_5\}$. Put $\sigma:= (u_1\ u_2)(u_3\ u_4)\in \alt{n}$ and $z:= \sigma e\sigma^{-1}$. It holds
       \begin{gather*}
         ze(1) = z(u_1) = \sigma e \sigma^{-1}(u_1) = \sigma e(u_2) = \sigma(u_3) = u_4,\\
         ez(1) = e\sigma e \sigma^{-1}(1) = e\sigma e (1) = e\sigma (u_1) = e(u_2) = u_3.
       \end{gather*}
       And that proves that $ze^1(1) = u_4\neq u_3 = e^1z(1)$.
       \item If $\lambda_1 = 3$, then by the assumption $n\geq5$, we know $\alpha_1 \geq2$. For $i = 1, 2, \dots, 5$ we may find $u_i\in \operatorname{Dom}(e)$ in such a way that $1, u_1, \dots, u_5$ are pairwise distinct and $e = (1\ u_1\ u_2)(u_3\ u_4\ u_5)\hat{e}$ for some $\hat{e}\in \sym{n}$ acting as an identity on $\{1, u_1, \dots, u_5\}$. Put $\sigma:= (u_2\ u_3\ u_4)\in \alt{n}$ and $z:= \sigma z \sigma^{-1}$. It holds
       \begin{gather*}
           ze(1) = z(u_1) = \sigma e \sigma^{-1}(u_1) = \sigma e (u_1) = \sigma (u_2) = u_3,\\
           ze^2(1) = z(u_2) = \sigma e \sigma^{-1}(u_2) = \sigma e (u_4) = \sigma(u_5) = u_5,\\
           ez(1) = e\sigma e \sigma^{-1}(1) = e \sigma e (1) = e\sigma (u_1) = e(u_1) = u_2,\\
           e^2z(1) = e(ez(1)) = e(u_2) = 1.
       \end{gather*}
       And that proves $ze^1(1) = u_3 \neq u_2 = e^1z(1)$, $ze^2(1) = u_5 \neq 1 = e^2z(1)$.
       \item Otherwise $\lambda_1\geq 4$. For $i = 0, 1, \dots, \lambda_1 -1$ denote $u_i := e^i(1)$\footnote{$u_0 = e^0(1) = 1$}. Elements $u_0, u_1, \dots, u_{\lambda_1 - 1}$ are pairwise distinct. Put $\sigma := (u_0\ u_1\ u_2)\in \alt{n}$ and $z:= \sigma e\sigma^{-1}$. For any $k = 1, 2, \dots, \lambda_1 - 1$, it now holds $e^k(u_j) = u_{j+k \operatorname{mod} \lambda_1}$. That shows $e^kz(u_0) = e^k\sigma e \sigma^{-1}(u_0) = e^k\sigma e (u_2) = e^k\sigma (u_3) = e^k(u_3) = u_{3+k \operatorname{mod} \lambda_1}$. On the other hand,
       $$ze^k(u_0) = z(u_{k\operatorname{mod}\lambda_1}) = z(u_k) =  \begin{cases}
            u_2,& \text{if } k = 1,\\
            u_0,& \text{if } k = 2,\\
            u_{k+1},& \text{if } 3\leq k \lneq \lambda_1 - 1,\\
            u_1,& \text{if } k= \lambda_1 - 1.
     \end{cases}$$
    
    For a contradiction, if it was the case that $e^kz(u_0) = ze^k(u_0)$, then
     $$3+k\operatorname{mod}\lambda_1 = \begin{cases}
            2,& \text{if } k = 1,\\
            0,& \text{if } k = 2,\\
            k+1,& \text{if } 3\leq k\lneq \lambda_1 - 1,\\
            1,& \text{if } k = \lambda_1 - 1.
     \end{cases}$$
    Which may be reduced in modular arithmetic to
    $$0 \equiv_{\lambda_1} \begin{cases}
            -k-1\equiv_{\lambda_1} -2,& \text{if } k = 1,\\
            -k-3\equiv_{\lambda_1} -5,& \text{if } k =  2,\\
            -2,& \text{if } 3\leq k \lneq \lambda_1 - 1,\\
            -k-2\equiv_{\lambda_1}-1,& \text{if } k = \lambda_1 - 1.
     \end{cases}$$
     Since $\lambda_1\geq4$, the only option left is $k=2, \lambda_1 = 5$. Nevertheless, we get a contradiction anyway $e^2z(u_2) = ze^2(u_2) = z(u_4) =\sigma e \sigma^{-1}(u_4) = \sigma e (u_4) = \sigma (u_0) = u_1 \neq u_2 = e^2(u_0) = e^2z(u_2).$
   \end{itemize}
\end{proof}

\begin{theorem} \label{theorem: An, Sn}
    For any $n\in \bb{N}$, the group $\Alt{n}$ (resp. $\Sym{n}$) is good.
\end{theorem}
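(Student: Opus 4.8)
The plan is to treat two regimes separately: $n\geq 5$, where essentially all the work is already contained in Proposition~\ref{prop: there is z in sym and alt}, and $n\leq 4$, where the statement collapses because of a bound on element orders. Throughout, recall that $\Sym{n}$ and $\Alt{n}$ are finite, so being good just means: for every conjugacy class $C$ of $\Alg{G}\in\{\Sym{n},\Alt{n}\}$ and every $c\in C$, there is $z\in C$ with $\Gen{}{c}\cap\Cent{\Alg{G}}{z}\leq\operatorname{Z}(\Gen{}{C})$. I will use two trivial reductions repeatedly: since $\Alg{1}\leq\operatorname{Z}(\Gen{}{C})$, it suffices in favourable cases to produce $z$ with $\Gen{}{c}\cap\Cent{\Alg{G}}{z}=\Alg{1}$; and since $\Cent{\Alt{n}}{z}=\Alt{n}\cap\Cent{\Sym{n}}{z}\leq\Cent{\Sym{n}}{z}$, a witness $z$ that works for $c$ inside $\Sym{n}$ also works inside $\Alt{n}$, provided $z$ lies in the relevant $\Alt{n}$-class.

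For $n\geq 5$, fix a conjugacy class $C$ of $\Alg{G}$, fix $c\in C$, and view $c\in\sym{n}$. Apply Proposition~\ref{prop: there is z in sym and alt} with $e=c$ to obtain $z\in\{gcg^{-1}\mid g\in\alt{n}\}$ with $\Gen{}{c}\cap\Cent{\Sym{n}}{z}=\Alg{1}$. If $\Alg{G}=\Sym{n}$, then $z\in\{gcg^{-1}\mid g\in\alt{n}\}\subseteq\cl{\Sym{n}}{c}=C$ and we are done. If $\Alg{G}=\Alt{n}$, then $\{gcg^{-1}\mid g\in\alt{n}\}$ is exactly $\cl{\Alt{n}}{c}=C$, so $z\in C$, and by the second reduction $\Gen{}{c}\cap\Cent{\Alt{n}}{z}\leq\Gen{}{c}\cap\Cent{\Sym{n}}{z}=\Alg{1}\leq\operatorname{Z}(\Gen{}{C})$. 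Hence both groups are good in this range.

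For $n\leq 4$, every element of $\Sym{n}$, hence of $\Alt{n}$, has order $1,2,3$ or $4$, i.e.\ a prime power or $1$. Given $C$ and $c\in C$, form the conjugation quandle $\Alg{C}=(C,\star)$ over $\Alg{H}:=\Gen{}{C}$; this exists by Theorem~\ref{theorem: classification}, since $C$ is closed under conjugation by $\Alg{G}\supseteq\Alg{H}$. By Proposition~\ref{prop: powers of prime} the left translation $L_c$ contains a regular cycle, so Proposition~\ref{prop: centralizer property general} yields $z\in C$ with $\Gen{}{c}\cap\Cent{\Alg{H}}{z}\leq\operatorname{Z}(\Alg{H})$. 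Since $c\in\Alg{H}$, any $x\in\Gen{}{c}\cap\Cent{\Alg{G}}{z}$ already lies in $\Alg{H}\cap\Cent{\Alg{G}}{z}=\Cent{\Alg{H}}{z}$, so $\Gen{}{c}\cap\Cent{\Alg{G}}{z}=\Gen{}{c}\cap\Cent{\Alg{H}}{z}\leq\operatorname{Z}(\Alg{H})=\operatorname{Z}(\Gen{}{C})$, as required. Thus $C$ is good in $\Alg{G}$ in this range as well.

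The honest assessment is that the substance of this theorem sits in Proposition~\ref{prop: there is z in sym and alt}: the reason its $z$ is constructed as an \emph{$\alt{n}$-conjugate} of $e$ (rather than merely an $\sym{n}$-conjugate) is precisely so that the same witness serves the $\Alt{n}$-class as well as the $\Sym{n}$-class. The only points needing care in the outer argument are bookkeeping: (i) checking the produced $z$ really sits in the conjugacy class of the ambient group currently under consideration, and (ii) passing between $\Cent{\Alg{G}}{z}$ in the definition of good and the centralizer ($\Cent{\Sym{n}}{z}$, resp.\ $\Cent{\Alg{H}}{z}$) handed to us by earlier results — each is a one-line inclusion. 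I anticipate no genuine obstacle; the case $n\leq 4$ is purely the observation about element orders feeding into Proposition~\ref{prop: powers of prime}.
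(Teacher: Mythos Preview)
Your proof is correct and follows essentially the same two-case strategy as the paper: invoke Proposition~\ref{prop: there is z in sym and alt} for $n\geq 5$ and the prime-power-order observation together with Proposition~\ref{prop: powers of prime} for $n\leq 4$. If anything, you are more careful than the paper in spelling out why the $\alt{n}$-conjugate $z$ lands in the correct conjugacy class for each ambient group, and in bridging the gap between $\Cent{\Alg{G}}{z}$ (appearing in the definition of \emph{good}) and $\Cent{\Sym{n}}{z}$ resp.\ $\Cent{\Alg{H}}{z}$ (supplied by the cited results); the paper simply cites Corollary~\ref{cor: regular cycle characterization for Cl} at both junctures.
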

\begin{proof}
    Let $C:=\cl{\Alt{n}}{e}$ be a conjugacy class and $\Alg{C}:= \Cl{\Alt{n}}{e}$ a quandle for $e\in \alt{n}$ (resp. $C:=\cl{\Sym{n}}{e}$ and $\Alg{C}:= \Cl{\Sym{n}}{e}$ for $e\in \sym{n}$).
    \begin{itemize}
        \item If $n\geq 5$, then by Proposition \ref{prop: there is z in sym and alt} there is $z\in\left\{geg^{-1}\mid g\in\alt{n}\right\}\subseteq C$ such that $\Gen{}{e}\cap\Cent{\Sym{n}}{z} =\Alg{1}$. Thus, $C$ is good by Corollary \ref{cor: regular cycle characterization for Cl}.
        \item If $n\leq 4$, then the element $e$ has to have order of at most four in $\Alt{n}$ (resp. $\Sym{n}$). All such possible orders are powers of prime and using \ref{prop: powers of prime}, we deduce that $L_e$ contains a regular cycle. By Corollary \ref{cor: regular cycle characterization for Cl}, we deduce that $C$ is good.
    \end{itemize}
\end{proof}

\begin{theorem}\label{theorem: D2n}
    For any $n\in \bb{N}$, $n\geq 3$, the dihedral group $\Dih{2n}$ is good.
\end{theorem}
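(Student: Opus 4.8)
The plan is to treat rotations and reflections separately, using the standard presentation $\Dih{2n}=\Gen{}{s,o\mid s^2=o^n=1,\ sos^{-1}=o^{-1}}$, under which every element of $\dih{2n}$ is either a rotation $o^k$ or a reflection $so^k$, and these two families exhaust the group. Since $\Dih{2n}$ is finite, all its conjugacy classes are finite, so it suffices to check that the conjugacy class of each rotation and of each reflection is good in $\Dih{2n}$.

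For a reflection $e=so^k$, the relation $sos^{-1}=o^{-1}$ gives $o^ks=so^{-k}$, whence $e^2=so^k\cdot so^k=s(so^{-k})o^k=1$; so the order of $e$ in $\Dih{2n}$ divides $2$ and is in particular a prime power. By Proposition \ref{prop: powers of prime} the left translation $L_e$ in the finite conjugation quandle $\Cl{\Dih{2n}}{e}$ contains a regular cycle, so condition \emph{(1)} of Corollary \ref{cor: regular cycle characterization for Cl} holds; its condition \emph{(3)} then yields, for every $c\in\cl{\Dih{2n}}{e}$, an element $z\in\cl{\Dih{2n}}{e}$ with $\Gen{}{c}\cap\Cent{\Alg{H}}{z}\leq\operatorname{Z}(\Alg{H})$, where $\Alg{H}:=\Gen{}{\cl{\Dih{2n}}{e}}$. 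Since $\Gen{}{c}\subseteq\Alg{H}$, intersecting $\Gen{}{c}$ with $\Cent{\Dih{2n}}{z}$ is the same as intersecting it with $\Cent{\Alg{H}}{z}$, so this is precisely the statement that $\cl{\Dih{2n}}{e}$ is good.

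For a rotation $e=o^k$, conjugation by a rotation fixes $e$, while conjugation by $so^j$ sends $e$ to $(so^j)o^k(so^j)^{-1}=so^ks^{-1}=o^{-k}$; hence $\cl{\Dih{2n}}{e}=\{o^k,o^{-k}\}$ and $\Alg{H}:=\Gen{}{\cl{\Dih{2n}}{e}}=\Gen{}{o^k}$ is cyclic, in particular abelian, so $\operatorname{Z}(\Alg{H})=\Alg{H}$. Then for every $c\in\cl{\Dih{2n}}{e}$ we have $\Gen{}{c}\subseteq\Alg{H}=\operatorname{Z}(\Alg{H})$, so $\Gen{}{c}\cap\Cent{\Dih{2n}}{z}\leq\operatorname{Z}(\Alg{H})$ holds for any $z\in\cl{\Dih{2n}}{e}$, and $\cl{\Dih{2n}}{e}$ is good. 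Since rotations and reflections exhaust $\dih{2n}$, every conjugacy class of $\Dih{2n}$ is good, i.e. $\Dih{2n}$ is good.

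There is no genuine obstacle here: the whole argument rests on Corollary \ref{cor: regular cycle characterization for Cl} and Proposition \ref{prop: powers of prime}. The only points needing care are the elementary dihedral computations — that a reflection is an involution (so Proposition \ref{prop: powers of prime} applies) and that a rotation's conjugacy class is $\{o^k,o^{-k}\}$ (so the ambient subgroup is abelian) — together with the harmless remark that, because $\Gen{}{c}$ already lies in $\Alg{H}$, the centralizer taken in $\Dih{2n}$ and in $\Alg{H}$ produce the same intersection, which is what lets one move between Corollary \ref{cor: regular cycle characterization for Cl} and the definition of a good class. The hypothesis $n\geq 3$ is used only to fix the non-abelian dihedral presentation and is otherwise immaterial to the argument.
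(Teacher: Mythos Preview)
Your proof is correct and follows essentially the same route as the paper: split into reflections and rotations, handle reflections via Proposition \ref{prop: powers of prime} (they are involutions), and handle rotations by computing that the conjugacy class is $\{o^k,o^{-k}\}$. The only cosmetic difference is that for rotations the paper observes $L_e=\operatorname{id}$ on a $\leq 2$-element set and invokes Corollary \ref{cor: regular cycle characterization for Cl}, whereas you bypass the corollary and verify goodness directly from the abelianness of $\Alg{H}=\Gen{}{o^k}$; your explicit remark reconciling $\Cent{\Dih{2n}}{z}$ with $\Cent{\Alg{H}}{z}$ is in fact more careful than the paper's phrasing.
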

\begin{proof}
    Let $\cl{\Dih{2n}}{e}$ be a conjugacy class in $\Dih{2n}$. Using Corollary \ref{cor: regular cycle characterization for Cl}, to show that $\cl{\Dih{2n}}{e}$ is good, it is enough to show that $L_e$ contains a regular cycle.

    By the definition of $\Dih{2n}$, there are elements $s,o\in\dih{2n}$ such that $\Dih{2n} = \Gen{}{s,o\mid e^2 = o^n = 1\ \&\ sos^{-1} = o^{-1}}$. Since $sos = s^{-1}os = s^{-1}os^{-1} = sos^{-1} = o^{-1}$, it holds that $\dih{2n}\subseteq \{so^k, o^k\mid k\in \bb{Z}\}$.
    
    \begin{itemize}
        \item If $e = so^k$, then $e^2 = (so^ks)o^k = o^{-k}o^k = 1$, and so the order of $e$ divides two. Thus, by Lemma \ref{prop: powers of prime}, $L_e$ contains a regular cycle.
        \item If $e = o^k$, then $\cl{\Dih{2n}}{e}\subseteq \{\phi_{so^{k'}}(e), \phi_{o^{k'}}(e)\mid k'\in \bb{Z}\} = \{o^{\pm k}\}$. Thus, $L_e = 1$ because $L_e$ is a permutation on a set with at most two elements and it already holds that $L_e(e) = e$.
    \end{itemize}   
\end{proof}

In Example \ref{example: connected and not connected}, we have shown that some of the good groups might define a conjugation quandle on a conjugacy class which are not connected, and yet, they have the Hayashi property. We see that finite nilpotent groups, $\Sym{n}, \Alt{n}$ and $ \Dih{2n}$ are good, and we were moreover able to verify in GAP that any group of order $\leq 500$ and any simple nonabelian groups of order $\leq 3.500.000$ is good.

\begin{problem}[Extended Hayashi's Problem]\label{problem: Hayashi property}
    What groups $\Alg{G}$ are good?
\end{problem}

\section{Acknowledgement}
This article is based on my bachelor's thesis. I want to thank David Stanovský for supervising it.

\section*{ORCID}
\noindent Filip Filipi - \url{https://orcid.org/0009-0005-0438-2012}

\bibliographystyle{unsrt}
\bibliography{refs.bib}
\end{document}